\DeclareSymbolFont{rsfscript}{OMS}{rsfs}{m}{n}
\DeclareSymbolFontAlphabet{\mathrsfs}{rsfscript}
\newtheorem{theorem}{Theorem}[section]
\newtheorem{proposition}[theorem]{Proposition}
\newtheorem{lemma}[theorem]{Lemma}
\newtheorem{corollary}[theorem]{Corollary}
\theoremstyle{remark}
\newtheorem{remark}{Remark}
\newcommand{\ais}{ai-semi\-ring}
\newcommand{\mB}{\mathcal{B}}
\newcommand{\mS}{\mathcal{S}}
\newcommand{\mY}{\mathcal{Y}}
\def\fb{finitely based}
\def\nfb{non\-finitely based}
\DeclareMathOperator{\var}{var}
\renewcommand*\subjclass[2][2010]{\def\@subjclass{#2}\@ifundefined{subjclassname@#1}{\ClassWarning{\@classname}{Unknown edition (#1) of Mathematics Subject Classification; using '2010'.}}{\@xp\let\@xp\subjclassname\csname subjclassname@#1\endcsname}}
\renewcommand{\subjclassname}{\textup{2010} Mathematics Subject Classification}
\begin{document}

\title[Semiring and involution identities of powers of inverse semigroups]{Semiring and involution identities\\ of powers of inverse semigroups}
\thanks{I. Dolinka was supported by the Personal grant F-121 of the Serbian Academy of Sciences and Arts, and, partially, by the Ministry of Science, Technological Development and Innovations of the Republic of Serbia. S. V. Gusev and M. V. Volkov were supported by the Russian Science Foundation (grant No. 22-21-00650).}

\author[I. Dolinka]{Igor Dolinka}
\address{{\normalfont (I. Dolinka) Department of Mathematics and Informatics, University of Novi Sad, 21101 Novi Sad, Serbia}}
\email{dockie@dmi.uns.ac.rs}

\author[S. V. Gusev]{Sergey V. Gusev}
\address{{\normalfont (S. V. Gusev, M. V. Volkov) Institute of Natural Sciences and Mathematics, Ural Federal University, 620000 Ekaterinburg, Russia}}
\email{sergey.gusb@gmail.com}
\email{m.v.volkov@urfu.ru}

\author[M. V. Volkov]{Mikhail V. Volkov}

\begin{abstract}
The set of all subsets of any inverse semigroup forms an involution semiring under set-theoretical union and element-wise multiplication and inversion. We find structural conditions on a finite inverse semigroup guaranteeing that neither semiring nor involution identities of the involution semiring of its subsets admit a finite identity basis.
\end{abstract}

\keywords{Additively idempotent semiring, Finite basis problem, Power semiring, Involution semigroup, Inverse semigroup, Clifford semigroup}

\subjclass{16Y60, 20M18, 08B05}

\maketitle

\section{Introduction}
\label{sec:introduction}

In this note, an \emph{additively idempotent semiring} (\ais, for short) is understood as an algebraic structure $\mathcal{S}=(S;\,+,\cdot)$ with two binary operations, addition $+$ and multiplication $\cdot$, such that
\begin{itemize}
\item the additive reduct $(S;\,+)$ is a \emph{semilattice} (that is, a commutative and idempotent semigroup),
\item the multiplicative reduct $(S;\,\cdot)$ is a semigroup,
\item multiplication distributes over addition on the left and on the right:
\[
s(t+u)=st+su\ \text{ and }\ (s+t)u=su+tu\ \text{ for all }\ s,t,u\in S.
\]
\end{itemize}
Motivations for studying such structures come from various parts of pure and applied mathematics (e.g., idempotent analysis, tropical geometry, and optimization), where \ais{}s often serve as the foundation for fruitful algebraic approaches; contributions to the conference volumes \cite{Guna98,LiMa:2005} provide diverse examples. Within algebra and its applications to computer science, \ais{}s arise, e.g., in studying binary relations \cite{Jip17}, regular languages \cite{Polak01,Polak03a,Polak03b,Polak04}, endomorphisms of semilattices \cite{JKM09}, group complexity of finite semigroups \cite[Chapter 9]{RhSt09}. Ai-semirings \emph{per se} attract researchers' growing attention as a rich source of rather unexpected examples and challenging problems.

An important family of \ais{}s results from the powerset construction applied to semigroups. The powerset $\mathcal{P}(S)$ of any set $S$ constitutes a semilattice under union; besides, if $(S,\cdot)$ is a semigroup, then one can extend its multiplication to $\mathcal{P}(S)$ by letting
\[
A\cdot B:=\{ab\mid a\in A,\ b\in B\}\ \text{ for all }\ A,B\subseteq S.
\]
It is known and easy to verify that the multiplication defined this way is associative and distributes over union on the left and on the right. Thus, $(\mathcal{P}(S);\,\cup,\cdot)$ is an \ais, called the \emph{power semiring} of the semigroup $(S,\cdot)$.

We address the finite axiomatizability question (aka the Finite Basis Problem) for semiring identities satisfied by power semirings. Recall that a set $\Sigma$ of identities valid in an \ais{} $\mathcal S$ is called an \emph{identity basis} for $\mathcal S$ if all identities holding in $\mathcal S$ can be inferred from $\Sigma$. If $\mathcal S$ admits a finite identity basis, it is \emph{finitely based}; otherwise, $\mathcal S$ is \emph{nonfinitely based}. The Finite Basis Problem for a class of \ais{}s is the issue of determining which \ais{}s from this class are finitely based and which are not. For several classes of \ais{}s, the Finite Basis Problem was studied in the first decade of the 2000s; see \cite{AEI03,AM11} and a series of papers by the first-named author~\cite{Dolinka07,Dolinka09a,Dolinka09b,Dolinka09c} for the main results of that period. A new activity phase has emerged since the beginning of the 2020s when new powerful methods have been developed; see \cite{Vol21,JackRenZhao22,GuVo23a,GuVo23b,RJZL23,WuRenZhao,WuZhaoRen23}.

The first-named author initiated the study of the Finite Basis Problem for power semirings in~\cite{Dolinka09a}. In particular, Problem 6.5 in \cite{Dolinka09a} asked for a description of finite groups whose power semirings are nonfinitely based (provided such groups exist, which was then unknown). In~\cite{GuVo23b}, the second and third-named authors gave a partial solution: by~\cite[Theorem 1.1]{GuVo23b}, the power semiring of any finite nonabelian solvable group is nonfinitely based.

In the present note, we combine techniques from~\cite{Dolinka09a} and~\cite{GuVo23b} to extend the result just stated to power semirings of finite inverse semigroups. Recall that a semigroup $(S,\cdot)$ is said to be \emph{inverse} if for each element $s\in S$, there is a unique $t\in S$ satisfying $sts=s$ and $tst=t$; such $t$ is called the \emph{inverse of $s$} and denoted by $s^{-1}$. (The notation is borrowed from group theory and is justified by the fact that every group is an inverse semigroup in which the unique inverse of any element is nothing but its group inverse.) An inverse semigroup that is a union of its subgroups is called a \emph{Clifford semigroup}.

\begin{theorem}
\label{thm:powerinvsgp}
Let $\mathcal{S}=(S,\cdot)$ be a finite inverse semigroup. Suppose that either $\mathcal{S}$ is not Clifford or all subgroups of $\mathcal{S}$ are solvable and at least one of them is nonabelian. Then the identities of the power semiring $(\mathcal{P}(S);\,\cup,\cdot)$ admit no finite basis.
\end{theorem}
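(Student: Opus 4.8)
The plan is to reduce the theorem to two ``seed'' cases, handled respectively by \cite{GuVo23b} and by the methods of \cite{Dolinka09a}, using the fact that the power-semiring construction respects semigroup divisors. Recall that a semigroup $T$ is a \emph{divisor} of a semigroup $R$ if it is a homomorphic image of a subsemigroup of $R$. If $T=\varphi(U)$ with $U\le R$, then $A\mapsto\varphi(A)$ maps the subsemiring $\mathcal{P}(U)$ of $\mathcal{P}(R)$ onto $\mathcal{P}(T)$, so $\mathcal{P}(T)$ is a divisor of the semiring $\mathcal{P}(R)$ and in particular $\mathcal{P}(T)\in\var(\mathcal{P}(R))$. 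Since $\mathcal{S}$ is finite, $\mathcal{P}(S)$ is a finite semiring and $\var(\mathcal{P}(S))$ is locally finite. Hence it suffices to exhibit, under each of the two hypotheses, a divisor $T$ of $\mathcal{S}$ for which $\mathcal{P}(T)$ is \emph{inherently} \nfb, i.e., belongs to no \fb{} locally finite variety of semirings: then $\var(\mathcal{P}(S))$, being locally finite and containing $\mathcal{P}(T)$, is \nfb, and so is its generator $\mathcal{P}(S)$.

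To locate the seeds, suppose first that $\mathcal{S}$ is not Clifford, so that $aa^{-1}\ne a^{-1}a$ for some $a\in S$. Then the $\mathcal{D}$-class of $a$ contains at least two idempotents, so its principal factor is a Brandt semigroup $B(H,n)$ with $n\ge 2$; being a Rees quotient of an ideal of $\mathcal{S}$, this principal factor is a divisor of $\mathcal{S}$. Restricting $B(H,n)$ to two of its indices yields a subsemigroup isomorphic to $B(H,2)$, which maps homomorphically onto $B(\{1\},2)\cong B_2$; since divisibility is transitive, the five-element combinatorial Brandt semigroup $B_2$ is a divisor of $\mathcal{S}$. In the remaining case, $\mathcal{S}$ is a strong semilattice of its maximal subgroups, all of which are solvable, and any nonabelian one --- say $G=G_e$ --- is a subgroup, hence a divisor, of $\mathcal{S}$ that is a finite nonabelian solvable group; it is the blanket solvability assumption that keeps this seed within the scope of \cite{GuVo23b}. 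Thus the theorem reduces to two assertions: (i) $\mathcal{P}(B_2)$ is inherently \nfb; and (ii) $\mathcal{P}(G)$ is inherently \nfb{} for every finite nonabelian solvable group $G$.

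By \cite[Theorem~1.1]{GuVo23b}, $\mathcal{P}(G)$ is \nfb; for (ii) I would revisit that proof and check that its witnessing data --- a sequence of semiring identities valid in $\mathcal{P}(G)$ together with finite semirings refuting these identities while satisfying all identities of $\mathcal{P}(G)$ in a bounded number of variables --- remains effective when ``identities of $\mathcal{P}(G)$'' is replaced throughout by ``identities of $\mathcal{V}$'' for an arbitrary locally finite variety $\mathcal{V}$ containing $\mathcal{P}(G)$; this would upgrade ``\nfb'' to ``inherently \nfb''. For (i), the methods of \cite{Dolinka09a} should furnish an analogous package for $\mathcal{P}(B_2)$: an explicit infinite sequence of semiring identities valid in $\mathcal{P}(B_2)$, built from the ``$2\times 2$ matrix units with an adjoined zero, under set union'' behaviour of the subsets of $B_2$, together with finite semirings separating them, again chosen so as to persist in any locally finite overvariety. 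Combining (i), (ii), the seed-location step, and the divisor observation then proves the theorem.

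The crux is precisely the inherence demanded in (i) and (ii): designing the two identity sequences and their separating finite semirings and, above all, making these combinatorial constructions insensitive to enlarging the ambient variety, so that one genuinely obtains inherently \nfb{} seeds rather than seeds that are \nfb{} only on their own. (Should full inherence prove elusive, a fallback is to show directly that the seed identity sequences remain valid in $\mathcal{P}(S)$ and that their separating semirings lie outside $\var(\mathcal{P}(S))$, working with the concrete subset structure of $\mathcal{S}$.) By contrast, the semigroup-theoretic preliminaries --- extracting a Brandt principal factor, and hence $B_2$, in the non-Clifford case, and invoking the strong-semilattice-of-groups form of a finite Clifford semigroup in the other --- together with the bookkeeping that derives the conclusion for $\mathcal{P}(S)$ from (i) and (ii) are routine by comparison.
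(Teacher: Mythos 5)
Your treatment of the non-Clifford case is sound and essentially the paper's: you extract $\mB_2$ as a divisor of $\mathcal{S}$ (via a Brandt principal factor rather than the paper's monogenic inverse subsemigroups, which is an equivalent route), observe that the power construction respects divisors, and appeal to the inherent nonfinite basability of $(\mathcal{P}(B_2);\,\cup,\cdot)$; your item (i) is not something you need to re-engineer from scratch, since it is already available --- $(\mathcal{P}(B_2);\,\cup,\cdot)$ maps onto $(R_2;\,+,\cdot)$ by sending a subset to the sum of its matrices, and the latter is inherently \nfb{} by \cite[Theorem A]{Dolinka09a} (cf.\ Corollary~\ref{cor:infb} and \cite[Corollary 6.3]{Dolinka09a}).

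The genuine gap is your item (ii). What \cite[Theorem 1.1]{GuVo23b} gives is that $(\mathcal{P}(G);\,\cup,\cdot)$ is \nfb{} for a finite nonabelian solvable group $G$; it does not give inherent nonfinite basability, and your proposed ``upgrade'' is not a routine check of the same proof. That proof is conditional in an essential way: it derives nonfinite basability from the hypothesis that the multiplicative reduct of the whole semiring under consideration is a block-group with solvable subgroups (this is Proposition~\ref{prop:gusevaisemiring} here, i.e.\ \cite[Theorem 4.2]{GuVo23b}), a hypothesis that an arbitrary locally finite overvariety has no reason to inherit. Without inherence, nonfinite basability of the divisor $\mathcal{P}(G)$ simply does not transfer to $\mathcal{P}(S)$, so your reduction collapses exactly in the Clifford case. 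The paper circumvents this by working with $\mathcal{P}(S)$ itself: it proves that $(\mathcal{P}(S);\,\cdot)$ is a block-group (Proposition~\ref{prop:block-group}) with solvable subgroups (Proposition~\ref{prop:solvability}), and then, when some subgroup of $\mS$ is non-Dedekind, embeds a copy of the \ais{} $(B_2^1;\,+,\cdot)$ into $\var(\mathcal{P}(S);\,\cup,\cdot)$ via the subsets $E,\,H,\,g^{-1}H,\,Hg,\,g^{-1}Hg$ built from a non-normal subgroup $H$, so that Proposition~\ref{prop:gusevaisemiring} applies directly to $(\mathcal{P}(S);\,\cup,\cdot)$. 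Your plan also misses the resulting case split: when all subgroups are Dedekind but some are nonabelian, the non-normal-subgroup construction is unavailable, and the paper has to invoke a different result --- a nonabelian Dedekind group contains the quaternion group, and any \ais{} whose multiplicative reduct has a nonabelian nilpotent subgroup is \nfb{} by \cite[Theorem 6.1]{JackRenZhao22}. The fact that the authors needed these two separate arguments is strong evidence that the inherently \nfb{} seed you postulate in (ii) is not known (and quite possibly does not exist), so as it stands the Clifford half of your proof is a conjecture, not a proof.
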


Theorem~\ref{thm:powerinvsgp} essentially widens our knowledge of the Finite Basis Problem for power semirings since inverse semigroups are much more general than groups. In addition, as in~\cite{GuVo23b}, semiring arguments apply \emph{mutatis mutandis} to another natural algebraic structure on power sets. An \emph{involution semigroup} is a semigroup equipped with an \emph{involution}, that is, a unary operation $x\mapsto x^*$ that fulfils the laws $(xy)^*=y^*x^*$ and $(x^*)^*=x$. In each inverse semigroup $(S,\cdot)$, the operation that maps every element to its inverse is known to be an involution. This involution naturally extends to the powerset of $S$ by letting\footnote{A word of warning appears to be in place here: even though we retain the notation and the name `inversion', in general, the subset $A^{-1}$ is not the inverse of $A$ in the sense of the above definition of an inverse element!} $A^{-1}:=\{s^{-1}\mid s\in A\}$ for each $A\subseteq S$. This way we get the \emph{power involution semigroup} $(\mathcal{P}(S);\,\cdot,{}^{-1})$. Combining techniques from~\cite{ADV12} and~\cite{GuVo23b} leads to a generalization of~\cite[Theorem 1.3]{GuVo23b} parallel to the generalization of~\cite[Theorem 1.1]{GuVo23b} provided by Theorem~\ref{thm:powerinvsgp}. Recall that a group is called \emph{Dedekind} if all its subgroups are normal.

\begin{theorem}
\label{thm:invpowerinvsgp}
Let $\mathcal{S}=(S,\cdot)$ be a finite inverse semigroup. Suppose that either $\mathcal{S}$ is not Clifford or all subgroups of $\mathcal{S}$ are solvable and at least one of them is non-Dedekind. Then the identities of the power involution semigroup $(\mathcal{P}(S);\,\cdot,{}^{-1})$ admit no finite basis.
\end{theorem}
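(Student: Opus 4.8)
The plan is to reduce Theorem~\ref{thm:invpowerinvsgp} to Theorem~\ref{thm:powerinvsgp} by a standard translation between semiring and involution-semigroup identities, combined with a separate analysis of the two hypotheses. First I would treat the non-Clifford case. If $\mathcal{S}=(S,\cdot)$ is a finite inverse semigroup that is not a Clifford semigroup, then Theorem~\ref{thm:powerinvsgp} already tells us that the power semiring $(\mathcal{P}(S);\,\cup,\cdot)$ is nonfinitely based. The key observation here is that the semilattice addition $\cup$ on $\mathcal{P}(S)$ is term-definable from the multiplication $\cdot$ and the inversion ${}^{-1}$: indeed, for $A,B\subseteq S$ one has $A\cup B = AA^{-1}A \cup BB^{-1}B$ is not quite right, so instead I would use the fact that in any involution semigroup the operation $x\mapsto xx^*$ lands in the set of projections, and more to the point, in $\mathcal{P}(S)$ the union $A\cup B$ coincides with a suitable involution-semigroup term only after checking that the idempotent $\cup$-structure is recoverable; the correct route is to observe that $(\mathcal{P}(S);\,\cdot,{}^{-1})$ and $(\mathcal{P}(S);\,\cup,\cdot,{}^{-1})$ have the same term-equivalence type because $\cup$ is definable — this is exactly the phenomenon exploited in \cite{ADV12} and \cite{GuVo23b}, and I would cite \cite[Theorem 1.3]{GuVo23b} or the underlying lemma from \cite{ADV12} to make the transfer rigorous. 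Consequently a finite basis for the involution identities of $(\mathcal{P}(S);\,\cdot,{}^{-1})$ would yield a finite basis for the semiring identities of $(\mathcal{P}(S);\,\cup,\cdot)$, contradicting Theorem~\ref{thm:powerinvsgp}.

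Next I would handle the Clifford case. Suppose $\mathcal{S}$ is a finite Clifford semigroup all of whose subgroups are solvable, with at least one subgroup $G$ non-Dedekind. Being non-Dedekind, $G$ has a non-normal subgroup, hence $G$ is in particular nonabelian (every abelian group is Dedekind), so the hypotheses of Theorem~\ref{thm:powerinvsgp} are met and the power semiring $(\mathcal{P}(S);\,\cup,\cdot)$ is nonfinitely based. But this alone does not give Theorem~\ref{thm:invpowerinvsgp}, because the class of involution-semigroup identities is a priori weaker than the class of semiring identities — union is definable, but one must check the definition survives the passage. The cleanest argument: use the same definability of $\cup$ in terms of $\cdot$ and ${}^{-1}$ on $\mathcal{P}(S)$ as in the first paragraph. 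Once $\cup = \tau(x,y)$ for an involution-semigroup term $\tau$, any involution identity $u\approx v$ is equivalent (over the variety generated by $(\mathcal{P}(S);\,\cdot,{}^{-1})$) to a semiring identity and vice versa, so finite basability transfers in both directions; hence $(\mathcal{P}(S);\,\cdot,{}^{-1})$ nonfinitely based follows from $(\mathcal{P}(S);\,\cup,\cdot)$ nonfinitely based. The role of the non-Dedekind hypothesis, as opposed to the merely nonabelian hypothesis of Theorem~\ref{thm:powerinvsgp}, is that the involution ${}^{-1}$ on $\mathcal{P}(S)$ is sensitive to normality: the parallel lower-bound construction in \cite{ADV12} for involution semigroups of powers of groups requires a non-normal subgroup to produce the relevant family of identities resisting any finite basis, whereas without it the involution power semigroup of a Dedekind group may behave better.

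The main obstacle I expect is making the term-definability of $\cup$ on $\mathcal{P}(S)$ precise and then checking it is compatible with the \emph{variety-level} transfer of finite basability, rather than just a pointwise identity in the single algebra $\mathcal{P}(S)$. Concretely, for the transfer of nonfinite basability to work one needs: (i) $\cup$ is given by an involution-semigroup term $\tau(x,y)$ that holds as an identity of $(\mathcal{P}(S);\,\cup,\cdot,{}^{-1})$; (ii) the defining equations of a semilattice together with the distributive laws — i.e., the full \ais{} axioms written with $\tau$ in place of $\cup$ — are consequences of the involution-semigroup identities valid in $(\mathcal{P}(S);\,\cdot,{}^{-1})$, so that the two varieties are term-equivalent in a uniform way. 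Point (ii) is what requires genuine work and is where I would lean on the machinery of \cite{ADV12}; the key input is that in $\mathcal{P}(S)$ with $S$ inverse, $A\cup B$ can be expressed, for instance by a term built from the natural partial order and the fact that $A\mapsto AA^{-1}\cdots$ recovers enough idempotent structure — I would borrow the exact term from \cite[Section ...]{ADV12}. Once (i)–(ii) are in place, both theorems follow formally: a finite involution basis would, after substituting $\tau$, give a finite semiring basis, contradicting Theorem~\ref{thm:powerinvsgp}. I would also double-check the degenerate cases (e.g.\ $S$ a group, or $S$ a semilattice) separately to confirm the hypotheses of the theorem are exactly what is needed and no extra assumptions creep in.
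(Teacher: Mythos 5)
Your reduction collapses at its central claim: the union $\cup$ on $\mathcal{P}(S)$ is \emph{not} term-definable from $\cdot$ and ${}^{-1}$. Any involution-semigroup term $\tau(x,y)$ evaluates on subsets $A,B$ to an element-wise product of copies of $A$, $B$, $A^{-1}$, $B^{-1}$; if $S$ is a group and $A=\{a\}$, $B=\{b\}$ are distinct singletons, every such product is again a singleton, whereas $A\cup B$ has two elements. So no term from \cite{ADV12} (or anywhere else) can play the role of your $\tau$, and your conditions (i)--(ii) cannot be met; the ``transfer of nonfinite basability'' from Theorem~\ref{thm:powerinvsgp} to Theorem~\ref{thm:invpowerinvsgp} therefore never gets off the ground. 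Indeed, the very fact that the two theorems carry different hypotheses in the Clifford case (nonabelian versus non-Dedekind) already signals that the involution result is not a formal consequence of the semiring one: the semiring theorem covers Dedekind nonabelian (e.g.\ quaternion) subgroups via a semiring-specific result of \cite{JackRenZhao22}, for which no involution analogue is invoked, and this is precisely why the involution statement is restricted to the non-Dedekind case. Your reading of the non-Dedekind hypothesis as an artifact of a ``lower-bound construction in \cite{ADV12}'' is likewise off target.

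The actual proof must treat the involution signature directly, which is what the paper does. In the non-Clifford case one finds an inverse subsemigroup $T$ of $\mathcal{S}$ mapping onto the Brandt semigroup $\mathcal{B}_2$, lifts this to a homomorphism $(\mathcal{P}(T);\,\cdot,{}^{-1})\to(\mathcal{P}(B_2);\,\cdot,{}^{-1})$ (using that homomorphisms of inverse semigroups preserve inversion), and concludes via Corollary~\ref{cor:infb}: $(\mathcal{P}(B_2);\,\cdot,{}^{-1})$ is inherently nonfinitely based because it maps onto $(R_2;\,\cdot,{}^T)$, which is inherently nonfinitely based by \cite{ADV12}. In the Clifford case with solvable subgroups and a non-Dedekind subgroup $G$, Propositions~\ref{prop:block-group} and~\ref{prop:solvability} show that $(\mathcal{P}(S);\,\cdot)$ is a block-group with solvable subgroups, and a non-normal subgroup $H\le G$ together with a suitable $g$ is used to build the explicit involution subsemigroup $B=\{E,H,g^{-1}H,Hg,g^{-1}Hg\}\cup J$ of $(\mathcal{P}(S);\,\cdot,{}^{-1})$ mapping onto the six-element Brandt monoid $(B_2^1;\,\cdot,{}^{-1})$; then Proposition~\ref{prop:gusevinvsemigroup} (from \cite{GuVo23b}) yields nonfinite basability. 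If you want to salvage your approach, you would have to replace the false definability claim with these direct constructions; there is no shortcut through Theorem~\ref{thm:powerinvsgp}.
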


The literature has lots of results on the Finite Basis Problem for involution semigroups (see, e.g., Section 1.5 of the recent monograph \cite{LeeBook:23}), but none of the previously known facts apply to the power involution semigroups of finite inverse semigroups.


\section{Powers of finite Clifford semigroups}
\label{sec:clifford}

The proof of our main results exploits some properties of power semigroups $(\mathcal{P}(S);\,\cdot)$ of a finite Clifford semigroup $(S;\,\cdot)$. Recall that a \emph{semilattice} is a semigroup satisfying the laws of commutativity $xy=yx$ and idempotency $x^2=x$. A semigroup $\mS=(S;\,\cdot)$ is said to be a \emph{semilattice of groups} if there exists a homomorphism $\varphi$ from $\mS$ onto a semilattice $\mY=(Y;\,\cdot)$ such that for each $\alpha\in Y$, the set $G_\alpha:=\alpha\varphi^{-1}$ is a subgroup of $\mS$. In this situation, $S=\bigcup_{\alpha\in Y}G_\alpha$ and we say that $\mS$ is the semilattice $\mY$ of the subgroups $G_\alpha$, $\alpha\in Y$. A classic structural result of semigroup theory, Clifford's theorem \cite[Theorem 3]{Clifford:1941} (see also \cite[Theorem 4.11]{Clifford&Preston:1961} or \cite[Theorem 4.2.1]{Howie:1995}), implies that every Clifford semigroup is a semilattice of groups.

A semigroup is called a \emph{block-group} if it satisfies the following quasi-identities:
\begin{gather}
ef=e^2=e\mathrel{\&}fe=f^2=f\to e=f,\label{eq:bg1}\\
ef=f^2=f\mathrel{\&}fe=e^2=e\to e=f.\label{eq:bg2}
\end{gather}
The power semigroups of finite groups are block-groups \cite[Proposition 2.4]{Pin:1980}; see also\cite[Proposition 11.1.2]{Almeida:95}. We extend this to the power semigroups of finite Clifford semigroups.

\begin{proposition}
\label{prop:block-group}
If $\mS=(S;\,\cdot)$ is a finite Clifford semigroup, then the power semigroup $(\mathcal{P}(S);\,\cdot)$ is a block-group.
\end{proposition}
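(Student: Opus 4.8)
The plan is to verify directly that $(\mathcal{P}(S);\,\cdot)$ satisfies the two block-group quasi-identities \eqref{eq:bg1} and \eqref{eq:bg2}. So suppose $E,F\in\mathcal{P}(S)$ are idempotents of the power semigroup (that is, $EE=E$ and $FF=F$) satisfying $EF=E$ and $FE=F$; I must show $E=F$, and symmetrically for the hypothesis $EF=F$, $FE=E$. By Clifford's theorem, $\mS$ is a semilattice $\mY=(Y;\,\cdot)$ of groups $G_\alpha$, $\alpha\in Y$; write $\varphi\colon S\to Y$ for the structural homomorphism. The first key step is to understand the idempotents of $\mathcal{P}(S)$: I would like to show that a subset $E\subseteq S$ with $EE=E$ is tightly constrained by the behaviour of $\varphi$ on it. Since $\varphi$ extends to a semiring (hence semigroup) homomorphism $\mathcal{P}(S)\to\mathcal{P}(Y)$, the image $E\varphi$ is an idempotent of $\mathcal{P}(Y)$; but $\mathcal{P}(Y)$ is itself a semilattice (the powerset of a semilattice, with element-wise product, is commutative and idempotent — this is a routine check), so \emph{every} subset of $Y$ is idempotent and this gives no information by itself. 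The real content must come from looking inside a single $\mathcal{H}$-class.

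The second step, which I expect to be the crux, is a local analysis. Fix an idempotent $E$ of $\mathcal{P}(S)$ and an element $\alpha\in E\varphi$; let $E_\alpha:=E\cap G_\alpha$, a nonempty subset of the group $G_\alpha$. From $EE=E$ one extracts, by projecting onto the $\alpha$-component, that $E_\alpha$ is ``absorbing enough'' — more precisely I would argue that $E_\alpha E_\alpha\subseteq E_\alpha$ together with the reverse-type inclusions forces $E_\alpha$ to be a coset-like object, and then invoke (the Clifford/group case already recorded before the proposition, \cite[Proposition 2.4]{Pin:1980} / \cite[Proposition 11.1.2]{Almeida:95}) that in the \emph{group} power semigroup $\mathcal{P}(G_\alpha)$ the only idempotents are the subgroups of $G_\alpha$. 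The subtlety is that an idempotent of $\mathcal{P}(S)$ need not restrict to an idempotent of $\mathcal{P}(G_\alpha)$, because products can leave $G_\alpha$; one has to track how multiplication in $\mS$ moves mass between $\mathcal{H}$-classes via the semilattice order on $Y$, and show that the top components behave group-theoretically. I would organize this by letting $\alpha$ be a \emph{maximal} element of $E\varphi$ in the semilattice order, so that for $e,e'\in E_\alpha$ the product $ee'$ lands in $G_{\alpha\alpha}=G_\alpha$ and stays in $E$, whence $E_\alpha\in\mathcal{P}(G_\alpha)$ is genuinely idempotent and therefore a subgroup $H_\alpha\le G_\alpha$ containing the identity $1_\alpha$ of $G_\alpha$.

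With that structural picture in hand, the final step is the comparison of $E$ and $F$. From $EF=E$, $FE=F$ (and the idempotency), passing to images under $\varphi$ gives $E\varphi\cdot F\varphi=E\varphi$ and $F\varphi\cdot E\varphi=F\varphi$ in the semilattice $\mathcal{P}(Y)$; since $\mathcal{P}(Y)$ is a semilattice, its own block-group (indeed semilattice) quasi-identities are trivially satisfied and in fact $X\cdot Y = X$ and $Y\cdot X=Y$ in a semilattice force $X=Y$ directly by $X=XY=YX=Y$ — wait, more carefully: in a commutative idempotent semigroup $XY=X$ and $YX=Y$ give $X=XY=YX=Y$. So $E\varphi=F\varphi=:Z$. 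Now fix a maximal $\alpha\in Z$. Restricting $EF=E$ to the $\alpha$-component, the only contributions to $(EF)\cap G_\alpha$ come from $E_\alpha\cdot F_\alpha$ (since $\alpha$ is maximal, nothing from lower components multiplies up into $G_\alpha$), so $H_\alpha^{(E)}\cdot H_\alpha^{(F)}=H_\alpha^{(E)}$ as subsets of $G_\alpha$; since these are subgroups of $G_\alpha$, a subgroup product $HK=H$ forces $K\subseteq H$, and symmetrically from $FE=F$ we get $H_\alpha^{(E)}\subseteq H_\alpha^{(F)}$, hence $H_\alpha^{(E)}=H_\alpha^{(F)}$, i.e.\ $E\cap G_\alpha=F\cap G_\alpha$ for every maximal $\alpha$. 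Finally one runs a downward induction on the semilattice order: for a non-maximal $\beta\in Z$, the $\beta$-component of $EF$ receives contributions $E_\gamma F_\delta$ with $\gamma\delta=\beta$, and using that $E$ and $F$ already agree on all strictly larger components together with the idempotent and absorption relations, one peels off those known contributions and concludes $E\cap G_\beta=F\cap G_\beta$. Summing over all $\beta\in Z$ gives $E=F$. The second quasi-identity \eqref{eq:bg2} is handled by the same argument with the roles of the absorption equations swapped (there $EF=F$, $FE=E$, which after applying $\varphi$ again yields $E\varphi=F\varphi$ and then the same componentwise subgroup comparison). The main obstacle, as noted, is Step 2 — justifying that along the maximal components an idempotent of $\mathcal{P}(S)$ really does restrict to a subgroup of the corresponding maximal group, so that the known group-theoretic fact can be brought to bear; once that is set up cleanly the rest is bookkeeping along the semilattice.
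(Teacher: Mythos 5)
Your overall strategy---project through the structural homomorphism $\varphi$ onto the semilattice $\mY$ to get $E\varphi=F\varphi$, then compare $E$ and $F$ inside the groups $G_\alpha$ via the subgroup structure of the intersections $E\cap G_\alpha$---is the same as the paper's. But your execution has a genuine gap, and it originates in a worry that is unfounded. You only establish ``$E\cap G_\alpha$ is a subgroup'' for \emph{maximal} $\alpha\in E\varphi$, on the grounds that for other $\alpha$ ``products can leave $G_\alpha$''. They cannot: if $e,e'\in E\cap G_\alpha$, then $ee'\in E$ because $E^2=E$ makes $E$ a subsemigroup, and $(ee')\varphi=\alpha\alpha=\alpha$, so $ee'\in G_\alpha$. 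Hence $E\cap G_\alpha$ is a nonempty subsemigroup of the finite group $G_\alpha$, and therefore a subgroup, for \emph{every} $\alpha\in E\varphi$; maximality plays no role (nor do you ever need $E\cap G_\alpha$ to be idempotent in $\mathcal{P}(G_\alpha)$---nonempty subsemigroup of a finite group already suffices; also, the fact you cite from Pin is the block-group property of power semigroups of groups, not a description of the idempotents of $\mathcal{P}(G)$, though the latter is elementary).

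Because of that self-imposed restriction, your closing ``downward induction'' is where the argument actually breaks. For a non-maximal $\beta$, the $\beta$-component of $EF$ is $\bigcup_{\gamma\delta=\beta}(E\cap G_\gamma)(F\cap G_\delta)$; after discarding the terms with $\gamma,\delta$ strictly above $\beta$ (which agree with the corresponding terms for $FE$ by the inductive hypothesis), you are left with unions still involving $E\cap G_\beta$ and $F\cap G_\beta$ on both sides, and equality of such unions does not follow by any ``peeling off''---you would need exactly the structural input you declined to establish at level $\beta$ (that $E\cap G_\beta$ and $F\cap G_\beta$ are subgroups, or that the identity of a higher component acts identically on $G_\beta$). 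Once the subgroup fact is in hand at every level, no induction is needed at all: a product of two subgroups contains their union (each contains the identity), so
$E=EF\supseteq\bigcup_{\alpha\in E\varphi}(E\cap G_\alpha)(F\cap G_\alpha)\supseteq\bigcup_{\alpha\in E\varphi}\bigl((E\cap G_\alpha)\cup(F\cap G_\alpha)\bigr)=E\cup F$,
whence $F\subseteq E$, and symmetrically $E\subseteq F$. That is the paper's one-step finish. Finally, you never dispose of the case $E=\varnothing$ or $F=\varnothing$, which should be dismissed at the outset before speaking of nonempty intersections and identities of subgroups.
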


\begin{proof}
By symmetry, it suffices to show that the semigroup $(\mathcal{P}(S);\,\cdot)$ satisfies the implication \eqref{eq:bg1}. Thus, let $E,F\subseteq S$ be such that $EF=E^2=E$ and $FE=F^2=F$; we aim to show that $E=F$. If one of the subsets $E$ or $F$ is empty, then so is the other one, and we are done. Assume that $E,F\ne\varnothing$; then the equalities $E^2=E$ and $F^2=F$ imply that the subsets $E$ and $F$ are closed under multiplication in $\mS$, and so they are subsemigroups in $\mS$.

Let $\mS$ be a semilattice $\mY=(Y;\,\cdot)$ of groups $G_\alpha$, $\alpha\in Y$ and $\varphi\colon\mS\to\mY$ the  homomorphism such that $G_\alpha=\alpha\varphi^{-1}$ for each $\alpha\in Y$. Applying $\varphi$ to the equalities $EF=E$ and $FE=F$ yields
\begin{align*}
  (E\varphi)(F\varphi) & =(EF)\varphi=E\varphi\ \text{ and} \\
  (F\varphi)(E\varphi) & =(FE)\varphi=F\varphi.
\end{align*}
Since the semilattice $\mY$ fulfils the commutative law, we have $(E\varphi)(F\varphi)=(F\varphi)(E\varphi)$, whence $E\varphi=F\varphi$. Clearly, $E\varphi=\{\alpha\in Y\mid E\cap G_\alpha\ne\varnothing\}$ and $F\varphi=\{\alpha\in Y\mid F\cap G_\alpha\ne\varnothing\}$ so that the equality $E\varphi=F\varphi$ means that $E\cap G_\alpha\ne\varnothing$ if and only if $F\cap G_\alpha\ne\varnothing$. For each $\alpha\in E\varphi$, the intersection $E\cap G_\alpha$ is a subsemigroup in the finite group $G_\alpha$, and hence a subgroup since in a finite group, every subsemigroup is a subgroup. Also $F\cap G_\alpha$ is a subgroup in $G_\alpha$ for each $\alpha\in E\varphi$. Obviously, the element-wise product of two subgroups of any groups contains the union of these subgroups. Using this, we see that
\begin{align*}
E=EF&=\left(\bigcup_{\alpha\in E\varphi}(E\cap G_\alpha)\right)\cdot\left(\bigcup_{\alpha\in E\varphi}(F\cap G_\alpha)\right)=\bigcup_{\alpha,\beta\in E\varphi}(E\cap G_\alpha)(F\cap G_\beta)\\
   &\supseteq \bigcup_{\alpha\in E\varphi}(E\cap G_\alpha)(F\cap G_\alpha)\supseteq\bigcup_{\alpha\in E\varphi}\left((E\cap G_\alpha)\cup(F\cap G_\alpha)\right)\\
   &=\left(\bigcup_{\alpha\in E\varphi}(E\cap G_\alpha)\right)\cup\left(\bigcup_{\alpha\in E\varphi}(F\cap G_\alpha)\right)=E\cup F,
\end{align*}
whence $E\supseteq F$. Analogously, we obtain that $F\supseteq E$, and hence, $E=F$.
\end{proof}

To prove the second property of the power semigroups of finite Clifford semigroups we need, we invoke a general fact established by Putcha.
\begin{lemma}[\!\!{\mdseries\cite[Theorem 1]{Putcha:79}}]
\label{lem:putcha}
Let $\mS=(S;\,\cdot)$ be a finite semigroup, $I$ an ideal of $\mS$, and $G$ a subgroup of the power semigroup $(\mathcal{P}(S);\,\cdot)$. Then $G$ has a normal subgroup $N$ such that $N$ is isomorphic to a subgroup of the semigroup $(\mathcal{P}(I);\,\cdot)$ and the quotient $G/N$ is isomorphic to a subgroup of the semigroup $(\mathcal{P}(S/I);\,\cdot)$.
\end{lemma}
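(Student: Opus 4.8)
The plan is to produce $N$ as the preimage of the identity under the homomorphism of power semigroups induced by the Rees projection. First I would note that the canonical map $\pi\colon S\to S/I$ that fixes $S\setminus I$ pointwise and collapses $I$ to the zero of $S/I$ is a semigroup homomorphism, and that it extends to a homomorphism $\bar\pi\colon(\mathcal P(S);\,\cdot)\to(\mathcal P(S/I);\,\cdot)$ via $A\mapsto\{a\pi\mid a\in A\}$; the equality $(AB)\bar\pi=(A\bar\pi)(B\bar\pi)$ is immediate from $\pi$ being a homomorphism. Restricting $\bar\pi$ to $G$ gives a homomorphism of the group $G$ onto the subsemigroup $G\bar\pi$ of $(\mathcal P(S/I);\,\cdot)$, and a homomorphic image of a group is a group, so $G\bar\pi$ is a subgroup of $(\mathcal P(S/I);\,\cdot)$. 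Putting $N:=\{A\in G\mid A\bar\pi=E\bar\pi\}$, where $E$ is the identity of $G$, the homomorphism theorem gives $N\trianglelefteq G$ and $G/N\cong G\bar\pi$, which settles the assertion about the quotient. It remains to embed $N$ into $(\mathcal P(I);\,\cdot)$ as a subgroup, and this is where the work lies.

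To set up the embedding I would first describe $N$ concretely: writing $E_0=E\setminus I$ and $E_1=E\cap I$ and identifying $S\setminus I$ with its image in $S/I$, an element $A\in G$ belongs to $N$ exactly when $A\setminus I=E_0$, while $A\cap I\ne\varnothing$ holds for such $A$ if and only if $E_1\ne\varnothing$. Two degenerate cases are then dealt with at once. If $E_1=\varnothing$, then $N=\{E\}$ is trivial and embeds into $\mathcal P(I)$ via any idempotent of that finite non-empty semigroup, whereas $G/N\cong G$ embeds into $\mathcal P(S/I)$ through $\bar\pi$. If $E_0=\varnothing$, i.e.\ $E\subseteq I$, then $A=EAE\subseteq I$ for every $A\in G$ because $I$ is an ideal, so $G$ is itself a subgroup of $\mathcal P(I)$ and one may take $N=G$. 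The substantive case is the mixed one, $\varnothing\ne E_1\subsetneq E$. For this I would reduce, by induction on the length of a maximal chain of ideals between $I$ and $S$ — using at each step that $(S/J)\big/(I/J)\cong S/I$ for an ideal $J$ with $J\subseteq I$ — to the situation in which no ideal lies strictly between $I$ and $S$; equivalently, in which $S/I$ is a finite $0$-simple, hence completely $0$-simple, semigroup. (This reduction is not quite automatic: one must check that the isomorphisms furnished at each stage are compatible with the further Rees projections, so that the normal subgroups obtained along the way still match up with $N$.)

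In the base case just described I expect the genuine difficulty. Here one may again dispose of the sub-cases $E\subseteq I$ and $E\cap I=\varnothing$ as above, and in the remaining mixed sub-case the plan is to coordinatise $S/I$ as a Rees matrix semigroup $\mathcal M^0(H;\Lambda,\Gamma;P)$ over a finite group $H$, read off the possible members of $N$ from the requirement $A\setminus I=E_0$, and verify that the set of subsets of $I$ so obtained is closed under the multiplication of $\mathcal P(I)$ and forms a group isomorphic to $N$; the sandwich identities $A=EA=AE=EAE$, valid for all $A\in G$, together with the structure matrix $P$ (which governs when a product of two elements of $S\setminus I$ falls into $I$) are the tools that make this feasible. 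The obstacle is precisely this closure-and-isomorphism step: the naive bijection $A\mapsto A\cap I$ from $N$ onto a set of subsets of $I$ is \emph{not} multiplicative, because the product $(AB)\cap I$ picks up the extra contributions $E_0^2\cap I$, $E_0\,(B\cap I)$ and $(A\cap I)\,E_0$ coming from the part $E_0$ of the identity lying outside $I$. One must therefore either correct this map or extract the group $N$ directly from the Rees coordinates of $S/I$, and understanding how $E_0$ interacts with $I$ under multiplication is where the real content of Putcha's argument resides.
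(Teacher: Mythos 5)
This lemma is not proved in the paper at all: it is quoted verbatim from Putcha with the citation \cite[Theorem 1]{Putcha:79}, so there is no in-paper argument to compare against, and your attempt has to stand on its own as a proof of Putcha's theorem. It does not. The half you complete is the easy, standard half: the Rees projection $\pi\colon S\to S/I$ induces $\bar\pi\colon\mathcal P(S)\to\mathcal P(S/I)$, its restriction to $G$ is a group homomorphism onto the subgroup $G\bar\pi$, and the kernel $N$ is normal with $G/N\cong G\bar\pi$ — all correct, as are your two degenerate cases $E\cap I=\varnothing$ and $E\subseteq I$. But the entire content of the lemma is the remaining claim that $N$ embeds as a subgroup into $(\mathcal P(I);\,\cdot)$ in the mixed case, and this you do not establish; you correctly observe that $A\mapsto A\cap I$ fails to be multiplicative because of the contributions of $E_0=E\setminus I$, and then leave the needed correction (or alternative construction) as an acknowledged open obstacle. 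A proof cannot end where "the real content of Putcha's argument resides."

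The proposed reduction to the case where $S/I$ is $0$-simple is also shakier than your parenthetical caveat suggests. If $I\subsetneq J\subsetneq S$ and you apply the statement to the pairs $(S,J)$ and $(J,I)$, you obtain $N_2\trianglelefteq N_1\trianglelefteq G$ with $N_2\hookrightarrow\mathcal P(I)$, $N_1/N_2\hookrightarrow\mathcal P(J/I)$, $G/N_1\hookrightarrow\mathcal P(S/J)$; but $N_2$ is then only subnormal in $G$, and recombining the two quotient embeddings into a single embedding $G/N_2\hookrightarrow\mathcal P(S/I)$ is not a formality — it is essentially an instance of the theorem itself applied to $S/I$ with the ideal $J/I$, so the induction as described is circular unless you prove a stronger statement about compatibility of the embedding $N_1\hookrightarrow\mathcal P(J)$ with the further Rees projection, which you do not formulate. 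So both the base case (the Rees-coordinate analysis of how $E_0$ multiplies into $I$) and the induction step are genuine gaps; to fill them you would in effect have to reproduce Putcha's argument, which is why the paper simply cites it.
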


\begin{proposition}
\label{prop:solvability}
If $\mS=(S;\,\cdot)$ is a finite Clifford semigroup with solvable subgroups, then all subgroups of the power semigroup $(\mathcal{P}(S);\,\cdot)$ are solvable.
\end{proposition}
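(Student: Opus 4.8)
The plan is to argue by induction on $n=|Y|$, where, by Clifford's theorem, $\mS$ is the semilattice $\mY=(Y;\,\cdot)$ of its subgroups $G_\alpha$, $\alpha\in Y$. Every subgroup of a Clifford semigroup is contained in one of its maximal subgroups, so the hypothesis is equivalent to saying that each $G_\alpha$ is solvable, and the same will hold for every Clifford semigroup we pass to. The base case $n=1$ is the purely group-theoretic assertion that every subgroup of the power semigroup of a finite solvable group is solvable. The inductive step will cut $\mS$ along a carefully chosen ideal $I$ and combine Lemma~\ref{lem:putcha} with the elementary fact that an extension of a solvable group by a solvable group is solvable: if the subgroups of $\mathcal{P}(I)$ and of $\mathcal{P}(\mS/I)$ are all solvable, then every subgroup $\mG$ of $\mathcal{P}(S)$ has a normal subgroup $N$, isomorphic to a subgroup of $\mathcal{P}(I)$, with $\mG/N$ isomorphic to a subgroup of $\mathcal{P}(\mS/I)$; both are solvable, hence so is $\mG$.

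For the base case, let $G$ be a finite group and $H$ a subgroup of $(\mathcal{P}(G);\,\cdot)$ with identity $E$; from $E^2=E$ it follows that $E$ is a subsemigroup, hence a subgroup, of $G$. Let $A\in H$ and let $B\in H$ be its inverse in $H$, so $EA=AE=A$ and $AB=BA=E$. A short computation (from $Ea\subseteq A$ one gets $|A|\ge|E|$, from $aB\subseteq E$ one gets $|B|\le|E|$ for $a\in A$, and applying both bounds to $B$ as well) shows $|A|=|E|$, whence $A=Ea=aE$ and therefore $a\in N_G(E)$; thus $A$ is a coset of $E$ inside the normaliser $N_G(E)$. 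The map $A\mapsto AE$ is then an injective homomorphism of $H$ into $N_G(E)/E$ (injective since $AE=A$), so $H$ embeds into the section $N_G(E)/E$ of $G$ and is solvable whenever $G$ is. (This is also available in the literature on power semigroups of groups.)

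For the inductive step take $n\ge2$ and fix a maximal element $\alpha_1$ of $Y$. Since $\mY$ is a finite semilattice, $J:=Y\setminus\{\alpha_1\}$ is a down-set (if $\beta\le\gamma\in J$ then $\beta\ne\alpha_1$ by maximality), and down-sets of a semilattice are ideals, so $I:=\bigcup_{\beta\in J}G_\beta$ is a proper nonempty ideal of $\mS$. The subsemigroup $I$ is again a finite Clifford semigroup with solvable subgroups, and its structure semilattice $J$ has $n-1$ elements, so by the induction hypothesis every subgroup of $(\mathcal{P}(I);\,\cdot)$ is solvable. On the other hand the Rees quotient $\mS/I$ is isomorphic to $G_{\alpha_1}$ with a zero element adjoined; for such a semigroup the map $X\mapsto X\setminus\{0\}$ is a homomorphism of its power semigroup onto $\mathcal{P}(G_{\alpha_1})$ whose restriction to any nontrivial subgroup is injective (either the identity of the subgroup avoids the adjoined zero, so all its elements are subsets of $G_{\alpha_1}$, or it contains that zero, so each element of the subgroup is determined by its intersection with $G_{\alpha_1}$). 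Hence every subgroup of $\mathcal{P}(\mS/I)$ embeds into $\mathcal{P}(G_{\alpha_1})$ and is solvable by the base case applied to the solvable group $G_{\alpha_1}$. Feeding this into Lemma~\ref{lem:putcha} as in the first paragraph completes the induction.

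I expect the main difficulty to be precisely the choice of ideal that makes the induction terminate. Splitting off the least group $G_{\alpha_0}$ is the naive move, but the resulting Rees quotient still has an $n$-element structure semilattice, so no obvious parameter drops; cutting along a maximal $\alpha_1$ instead makes the kernel part $I$ genuinely smaller, at the cost of a quotient of the special shape ``group with zero adjoined'', whose power semigroup must be handled separately — and the key observation is that this case folds back into the group base case. The remaining bookkeeping (checking that $I$ is an ideal, identifying $\mS/I$, and assembling the pieces via Putcha's Lemma) should be routine.
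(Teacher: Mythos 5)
Your proof is correct and follows essentially the same route as the paper's: induction on $|Y|$, cutting along the ideal $I$ obtained by removing a maximal element of the structure semilattice, applying Putcha's Lemma, and reducing subgroups of $\mathcal{P}(G_\mu^0)$ (the Rees quotient being a group with zero) back to subgroups of $\mathcal{P}(G_\mu)$. The only cosmetic differences are that you prove the group base case directly (the paper cites McCarthy--Hayes for the divisor statement) and you handle $\mathcal{P}(G_\mu^0)$ via the erase-zero homomorphism restricted to subgroups, whereas the paper partitions $\mathcal{P}(G_\mu^0)$ into subsets containing or omitting $0$ --- the same observation in different packaging.
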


\begin{proof}
Let $\mS$ be a semilattice $\mY=(Y;\,\cdot)$ of groups $G_\alpha$, $\alpha\in Y$. We induct on $|Y|$.

If $|Y|=1$, then $\mS$ is a group. It follows from \cite[Theorem 1.1]{MH73}, see also \cite[Section~11.1, item 2)]{Almeida:95}, that each subgroup of the power semigroup of any finite group is isomorphic to a \emph{divisor} (a quotient of a subgroup) of this group. Any divisor of a solvable group is solvable. Hence, if $\mS$ is solvable, so are all subgroups of $(\mathcal{P}(S);\,\cdot)$.

Let $|Y|>1$ and let $\mu$ be a maximal element of the partially ordered the set $(Y;\,\le)$, where the relation $\le$ is defined by $\alpha\le\beta$ for $\alpha,\beta\in Y$ if and only if $\alpha\beta=\alpha$. (It is known and easy to verify that $\le$ is a partial order.) Then the set $I:=S\setminus G_\mu=\bigcup_{\alpha\ne\mu}G_\alpha$ forms an ideal of $\mS$, and the Rees quotient $S/I$ is isomorphic to $G_\mu^0$, the group $G_\mu$ with 0 adjoined. Invoking Lemma~\ref{lem:putcha} for an arbitrary subgroup $G$ of the power semigroup $(\mathcal{P}(S);\,\cdot)$, we get that a normal subgroup $N$ of $G$ is isomorphic to a subgroup of the semigroup $(\mathcal{P}(I);\,\cdot)$ while the quotient $G/N$ is isomorphic to a subgroup $H$ of the semigroup $(\mathcal{P}(G_\mu^0);\,\cdot)$.

The semigroup $(I;\,\cdot)$ is a semilattice of $|Y|-1$ groups, so that the group $N$ is solvable by the inductive assumption. Since any extension of a solvable group by another solvable group is solvable, proving solvability of $G$ reduces to showing that the subgroup $H$ of $(\mathcal{P}(G_\mu^0);\,\cdot)$ is solvable.

The set $\mathcal{P}(G_\mu^0)$ of all subsets of $G_\mu^0$ partitions into two parts: the set of all subsets that omit 0 and the set of all subsets that contain 0. The former set is nothing but $\mathcal{P}(G_\mu)$ and so it forms a subsemigroup in $(\mathcal{P}(G_\mu^0);\,\cdot)$ while the latter set, which we denote by $\mathcal{P}_0(G_\mu^0)$, is easily seen to form an ideal in the subsemigroup of $(\mathcal{P}(G_\mu^0);\,\cdot)$ consisting of all non-empty subsets of $G_\mu^0$. If $H\cap\mathcal{P}_0(G_\mu^0)\ne\varnothing$, then the whole subgroup $H$ is contained in $\mathcal{P}_0(G_\mu^0)$ because any two elements of the subgroup are multiples of each other and any ideal contains all multiples of each of its elements. Thus, $H$ is a subgroup in either $(\mathcal{P}(G_\mu),\cdot)$ or $(\mathcal{P}_0(G_\mu^0),\cdot)$, but these two semigroups are isomorphic under the map that appends 0 to each subset of $G_\mu$. Therefore, $H$ is isomorphic to a subgroup in $(\mathcal{P}(G_\mu),\cdot)$ in any case, and hence, $H$ is solvable as a divisor of the solvable group $G_\mu$ (see the argument in the induction basis above).
\end{proof}

\begin{remark}
\label{rem:margolis}
Proposition~\ref{prop:solvability} also follows from a general result due to Margolis; see \cite[Theorem~7]{Margolis81}. We have chosen to include the above proof because it is based on completely elementary notions, while an alternative approach would require some understanding of Green's relations and pseudovarieties of semigroups.
\end{remark}

\section{Inherently nonfinitely based \ais{}s and involution semigroups}
\label{sec:infb}

The results of Section~\ref{sec:clifford} allow us to extend the approach of~\cite{GuVo23b} to power semirings of finite Clifford semigroups. To handle power semirings of finite inverse semigroups that are not Clifford, we need another tool, namely, the concept of inherent nonfinitely basability. We give the corresponding definitions for the case of \ais{}s.

The class of all \ais{}s satisfying all identities from a given set $\Sigma$ is called the \emph{variety defined by $\Sigma$}. Obviously, the satisfaction of an identity is inherited by forming direct products and taking divisors, which in the semiring situations are homomorphic images of subsemirings. Hence each variety is closed under these two operators, and varieties are characterized by this closure property (the HSP-theorem; see \cite[Theorem 11.9]{BuSa81}).

A variety is \emph{\fb} if it can be defined by a finite set of identities; otherwise it is \emph{\nfb}. Given an \ais{} ${\mS}$, the variety defined by all identities holding in $\mS$ is denoted by $\var\mS$ and called the \emph{variety generated by $\mS$}. By the very definition, $\mS$ and $\var\mS$ are simultaneously finitely or \nfb.

A variety is said to be \emph{locally finite} if each of its finitely generated members is finite. A finite \ais{} is called \emph{inherently \nfb} if it is not contained in any finitely based locally finite variety. The variety generated by a finite \ais{} is locally finite (this is an easy byproduct of the proof of the HSP-theorem; see \cite[Theorem 10.16]{BuSa81}); hence, to prove that a given finite \ais{} $\mS$ is \nfb, it suffices to exhibit an inherently \nfb\ \ais{} in the variety $\var\mS$.

The first example of an inherently \nfb\ \ais\ was the \ais{} of all binary relations on a two-element set; see \cite[Theorem A]{Dolinka09a}. For the use in the present paper, it is convenient to represent binary relations  as Boolean matrices. Recall that a \emph{Boolean} matrix is a matrix with entries $0$ and $1$ only. The addition and multiplication of such matrices are as usual, except that the addition of the entries is defined as $a+b:=\max\{a,b\}$. Let $R_n$ denote the set of all Boolean $n\times n$-matrices. The \ais\ $(R_n;\,+,\cdot)$ is essentially the same as the \ais\ of all binary relations on an $n$-element set subject to the operations of union and relational composition. Thus, we have:
\begin{proposition}
\label{prop:dolinkasemiring}
The \ais{} $(R_2;\,+,\cdot)$ is inherently \nfb.
\end{proposition}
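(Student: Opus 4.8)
The plan is to prove that $(R_2;\,+,\cdot)$ is inherently \nfb{} by the syntactic method built around Zimin words. Recall the Zimin words $Z_1=x_1$ and $Z_{k+1}=Z_kx_{k+1}Z_k$, viewed here as \ais{} monomials. The first ingredient is the \ais{} analogue of the combinatorial lemma underlying Sapir's criterion: \emph{if a variety of \ais{}s is locally finite and is defined by identities in at most $N$ variables, then for all sufficiently large $m$ it satisfies an identity $Z_m\approx\mathbf{t}$ with $\mathbf{t}\neq Z_m$}. I would invoke this or re-prove it; its proof is the familiar compactness/self-similarity argument, which uses the nesting $Z_{k+1}=Z_kx_{k+1}Z_k$ together with the finiteness of the relatively free \ais{} on the first $m$ variables. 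Granting the lemma, the proposition reduces to a single claim: for every $n$, the Zimin word $Z_n$ is an \emph{isoterm} for $(R_2;\,+,\cdot)$, meaning that $(R_2;\,+,\cdot)$ satisfies no identity $Z_n\approx\mathbf{t}$ with $\mathbf{t}\neq Z_n$. Indeed, were $(R_2;\,+,\cdot)$ contained in a \fb{} locally finite variety, that variety would be definable by identities in at most $N$ variables for some $N$, and then the lemma and the isoterm property would contradict each other.

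To verify the isoterm claim I would pass to Boolean matrices. Every \ais{} term equals, modulo the \ais{} axioms, a finite sum of words $w_1+\cdots+w_r$, and the identity $Z_n\approx w_1+\cdots+w_r$ holds in $(R_2;\,+,\cdot)$ exactly when $Z_n(\sigma)=w_1(\sigma)+\cdots+w_r(\sigma)$ for every assignment $\sigma$ of the variables to Boolean $2\times2$ matrices; in particular, each $w_i(\sigma)$ lies below $Z_n(\sigma)$ in the additive order of $R_2$ (the order $A\le B\iff A+B=B$). So it suffices to prove the statement $(\star)$: \emph{if a word $w$ satisfies $w(\sigma)\le Z_n(\sigma)$ in $R_2$ for every Boolean $2\times2$ assignment $\sigma$, then $w=Z_n$} --- for then each $w_i$ equals $Z_n$, whence $\mathbf{t}=Z_n$. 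For $(\star)$, elementary substitutions pin down the content: sending a variable of $Z_n$ to the zero matrix forces $w$ to contain that variable, while sending every variable of $Z_n$ to the identity matrix and any further variable of $w$ to the all-ones matrix forces $w$ to have no other variables. For words of exactly the content of $Z_n$ one then uses the classical matrix-unit assignments --- those witnessing that the Zimin words are isoterms for the Brandt monoid $B_2^1=\{O,I,E_{11},E_{12},E_{21},E_{22}\}$ inside the multiplicative reduct of $(R_2;\,+,\cdot)$ --- supplemented by the permutation matrices, all built recursively so as to follow the nesting $Z_{k+1}=Z_kx_{k+1}Z_k$; reread, they assert that unless $w=Z_n$, the matrix $w(\sigma)$ carries a $1$ in some position outside the support of $Z_n(\sigma)$.

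The step I expect to be the main obstacle is $(\star)$, and within it the passage from the ``equality'' form --- that $Z_n$ is an isoterm in the purely multiplicative sense, which is classical for $B_2^1$ --- to this ``domination'' form, which is what the presence of addition forces. One has to control not merely when two words agree under all Boolean-matrix assignments but when one word is pointwise $\le$ the other under all of them, and to show that this weaker relation is already rigid enough to force equality with $Z_n$: already for $Z_2=x_1x_2x_1$ this needs assignments outside $B_2^1$, such as sending $x_1$ to the permutation matrix and $x_2$ to the identity, so that $x_1x_2x_1x_2x_1$ evaluates strictly above $x_1x_2x_1$. The technical heart is to design the recursively defined assignments so that one bounded family works uniformly in $n$, letting the self-similar nesting of $Z_n$ be matched by the recursion in the assignment, while keeping the bookkeeping of supports and of matrix products under control. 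The compactness lemma of the first paragraph and the content reduction are routine by comparison.
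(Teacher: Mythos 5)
The paper does not actually prove Proposition~\ref{prop:dolinkasemiring} internally: it is quoted from \cite[Theorem A]{Dolinka09a}, and the only thing added here is Remark~\ref{rem:0semirings}, which notes that the proof given there for the signature with the constant zero works verbatim without it. So what you are proposing is a reconstruction of the cited proof, and your skeleton (a Sapir-type reduction via Zimin words, plus the statement that every $Z_n$ is an isoterm for $(R_2;\,+,\cdot)$ in the appropriate semiring sense) is indeed the right strategy in spirit. But as written the proposal has genuine gaps at exactly the two load-bearing points. First, your key lemma is vacuous as stated: \emph{every} \ais{} satisfies $Z_m\approx Z_m+Z_m$, and the right-hand side is a term different from $Z_m$, so ``satisfies an identity $Z_m\approx\mathbf{t}$ with $\mathbf{t}\neq Z_m$'' gives no contradiction with anything. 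What is needed is an identity $Z_m\approx\mathbf{t}$ with $\mathbf{t}$ not equivalent to $Z_m$ modulo the \ais{} axioms (equivalently, whose reduced sum-of-monomials form is not just $Z_m$), and in that corrected form the lemma is not ``the familiar compactness/self-similarity argument'': a derivation in the semiring signature applies identities between sums of words inside sums of words, splitting and merging summands at every step, so Sapir's analysis of how $Z_m$ can be rewritten by identities in at most $N$ variables must be redone from scratch. That adaptation is precisely the technical content of \cite{Dolinka09a} (and of \cite{ADV12} in the involution setting); you neither prove it nor indicate a source for it other than, in effect, the theorem being proved.

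Second, the statement you call $(\star)$ --- that $w(\sigma)\le Z_n(\sigma)$ under all Boolean $2\times2$ assignments forces $w=Z_n$ --- is the combinatorial heart, and you explicitly defer it: the content reduction is fine, but for words with exactly the content of $Z_n$ you only describe a plan (``design the recursively defined assignments\,\dots''), and no family of assignments is actually constructed or verified, uniformly in $n$ or otherwise. (A small symptom: in your own $Z_2$ example, $x_1\mapsto P$, $x_2\mapsto I$ makes $x_1x_2x_1x_2x_1$ evaluate to $P$, which is \emph{incomparable} with, not above, $Z_2(\sigma)=I$; the conclusion you need, namely failure of $\le$, still holds, but it shows the bookkeeping is not automatic.) So the proposal reduces the proposition to two unproved statements whose conjunction is essentially \cite[Theorem A]{Dolinka09a}; if citation is acceptable, the efficient route is the paper's own: invoke that theorem and check, as Remark~\ref{rem:0semirings} does, that its proof survives dropping the additive neutral element from the signature.
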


\begin{remark}
\label{rem:0semirings}
Semirings considered in~\cite{Dolinka09a} have, along with addition and multiplication, the nullary operation of taking 0, which is the neutral element for addition. So, strictly speaking, Theorem A in~\cite{Dolinka09a} deals with $(R_2;\,+,\cdot,\left(\begin{smallmatrix} 0&0\\0&0\end{smallmatrix}\right))$ rather than $(R_2;\,+,\cdot)$. However, the proof of this result in~\cite{Dolinka09a} works without a hitch for the latter semiring as well.
\end{remark}

Replacing  ``\ais{}'' with  ``involution semigroup'' in the definitions appearing so far in this section, we arrive at the definition of an inherently \nfb\ involution semigroup: a finite involution semigroup is \emph{inherently \nfb} if it is not contained in any finitely based locally finite variety of involution semigroups.

Matrix semigroups are natural equipped with the involution $M\mapsto M^T$, where $M^T$ is the transpose of the matrix $M$; for Boolean matrices, this involution corresponds to the operation of forming the dual binary relation. We will need the following result parallel to Proposition~\ref{prop:dolinkasemiring}:
\begin{proposition}[\!\!{\mdseries\cite[Theorem 3.14]{ADV12}}]
\label{prop:dolinkainvsemigroup}
The involution semigroup $(R_2;\,\cdot,{}^T)$ is inherently \nfb.
\end{proposition}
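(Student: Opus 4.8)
The plan is to follow the combinatorial route to inherent nonfinite basability that goes back to Sapir and was adapted to involution semigroups in \cite{ADV12}. The starting point is a Sapir-type sufficient condition: a finite involution semigroup $(S;\,\cdot,{}^*)$ is inherently \nfb\ provided that a suitable sequence of ``critical'' words $W_1,W_2,\dots$ — involutory counterparts of the Zimin words $Z_1=x_1$, $Z_{n+1}=Z_nx_{n+1}Z_n$ — are \emph{$*$-isoterms} for $(S;\,\cdot,{}^*)$, in the sense that every identity $W_n\approx\mathbf{w}$ holding in $(S;\,\cdot,{}^*)$ forces the $*$-word $\mathbf{w}$ to coincide with $W_n$. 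Granting this reduction, the whole task becomes the verification of the $*$-isoterm property for the concrete involution semigroup $(R_2;\,\cdot,{}^T)$; an alternative would be to exhibit within $(R_2;\,\cdot,{}^T)$ a subsemigroup closed under transposition that is already known to be inherently \nfb, since inherent nonfinite basability is inherited by larger involution semigroups, but I would carry out the direct verification, in parallel with the semiring argument underlying Proposition~\ref{prop:dolinkasemiring}.

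The core of the proof is the construction, for each $n$, of an assignment $\varphi_n$ of Boolean $2\times2$-matrices to the variables occurring in $W_n$ that witnesses that $W_n$ is a $*$-isoterm. One exploits that $R_2$ is precisely the monoid of all binary relations on a two-element set, hence contains the matrix units $\left(\begin{smallmatrix}1&0\\0&0\end{smallmatrix}\right)$, $\left(\begin{smallmatrix}0&1\\0&0\end{smallmatrix}\right)$, $\left(\begin{smallmatrix}0&0\\1&0\end{smallmatrix}\right)$, $\left(\begin{smallmatrix}0&0\\0&1\end{smallmatrix}\right)$, their sums, and the transposes of all of these. I would pick $\varphi_n$ so that a fixed entry of the product of the assigned matrices records a combinatorial ``trace'' of the word being multiplied out — essentially the set of coordinate pairs linked by a directed path through the factors — fine enough that no permutation, deletion, duplication, or insertion of factors, and no replacement of a block $u$ by its transpose $u^*$, leaves this trace unchanged unless the modification is trivial. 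The passage from $n$ to $n+1$ mimics the group case in \cite{GuVo23b}: $\varphi_{n+1}$ is obtained from $\varphi_n$ by reserving a fresh ``coordinate direction'' for the new variable $x_{n+1}$, which is feasible because the branching of $Z_{n+1}$ around its central letter can be encoded with one extra pair of matrix units.

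The step I expect to be the main obstacle is proving that these substitutions genuinely exclude \emph{every} simplification $W_n\to\mathbf{w}$ — in particular those that use the involution to fold a subword onto its reverse, a phenomenon with no analogue in the plain-semigroup or group settings. This is where the two-dimensionality of $R_2$ must be used with care: one has to rule out that the idempotent matrices together with transposition produce some accidental coincidence between the value of $W_n$ and that of a strictly shorter or rearranged $*$-word under all assignments. Once this combinatorial core is in place, inherent nonfinite basability of $(R_2;\,\cdot,{}^T)$ follows formally from the Sapir-type criterion, requiring no further semigroup- or group-theoretic ingredients.
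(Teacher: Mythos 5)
There is a genuine gap, and it is precisely the part you defer. First, note that the paper itself does not prove this proposition at all: it is imported from \cite[Theorem 3.14]{ADV12}, so the only ``proof'' in the paper is that citation, and a from-scratch argument would have to reconstruct the machinery of \cite{ADV12}. Your plan rests on a ``Sapir-type sufficient condition'' for involution semigroups — that a finite involution semigroup is inherently \nfb\ once the involutory Zimin words are $*$-isoterms — but no such analogue of Sapir's theorem can simply be quoted; finding a workable substitute for it in the involution setting is exactly the nontrivial content of \cite{ADV12}. The obstruction is the one you yourself flag (identities that use the involution to fold a factor onto its starred reverse have no counterpart in Sapir's machinery), and in \cite{ADV12} it is not overcome by bespoke matrix substitutions in $R_2$: roughly speaking, the authors establish a structural sufficient condition combining the fact that the plain semigroup reduct is inherently \nfb\ — for $(R_2;\,\cdot)$ this is immediate from Sapir's theory, since $R_2$ contains the six-element Brandt monoid $\mathcal{B}_2^1$, a classical inherently \nfb\ semigroup, and inherent nonfinite basability is inherited upward — with a condition on how the involution acts, namely the presence in the generated variety of a nontrivially ``twisted'' involution semilattice, which transposition provides because it moves some idempotents of $R_2$. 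So the heavy lifting in your outline is both unproved (the criterion) and unexecuted (the verification).

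Within your own programme, the mathematical content would be the explicit substitutions $\varphi_n$ together with a proof that they exclude \emph{every} involution-assisted collapse of $W_n$; you describe this only as a desideratum and explicitly defer it as ``the main obstacle,'' so nothing is actually verified, and it is exactly at this point that the naive adaptation of the group/semiring arguments is known to be delicate. Your fallback — exhibiting an involution subsemigroup of $(R_2;\,\cdot,{}^T)$ already known to be inherently \nfb\ — has no starting point either: the natural candidate, the Brandt monoid with inversion, is unavailable for this purpose (its involution fixes all idempotents, which is precisely the situation the condition of \cite{ADV12} excludes, and no inherently \nfb\ involution semigroup inside $R_2$ was known prior to that paper), so the upward-heredity shortcut is circular. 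To turn the proposal into a proof you would either have to prove an involutionary Sapir-type criterion yourself and then carry out the isoterm verification in full, or follow the actual route of \cite{ADV12}: invoke Sapir's theorem for the reduct via $\mathcal{B}_2^1\subseteq R_2$ and verify the twisted-semilattice condition for the transposition involution.
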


The subsemigroup of the semigroup $(R_2;\,\cdot)$ formed by the Boolean $2\times 2$-matrices
\begin{equation}\label{eq:brandt}
\begin{tabular}{cccccc}
$\left(\begin{matrix} 0&0\\0&0\end{matrix}\right)$
&
$\left(\begin{matrix} 0&1\\0&0\end{matrix}\right)$
&
$\left(\begin{matrix} 0&0\\1&0\end{matrix}\right)$
&
$\left(\begin{matrix} 1&0\\0&0\end{matrix}\right)$
&
$\left(\begin{matrix} 0&0\\0&1\end{matrix}\right)$
\end{tabular}
\end{equation}
is known as the 5-\emph{element Brandt semigroup} $\mB_2$. We denote the set of matrices in \eqref{eq:brandt} by $B_2$ so that $\mB_2=(B_2;\,\cdot)$. The semigroup $\mB_2$ is inverse: the inverse of each matrix in $B_2$ is the transpose of this matrix. Hence, the power set of $B_2$ can be endowed with both \ais{} and involution semigroup structures.

\begin{corollary}
\label{cor:infb}
The power semiring $(\mathcal{P}(B_2);\,\cup,\cdot)$ and the power involution semigroup $(\mathcal{P}(B_2);\,\cdot,{}^{-1})$ are inherently \nfb.
\end{corollary}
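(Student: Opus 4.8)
The plan is to realize $(R_2;\,+,\cdot)$ as a quotient of $(\mathcal{P}(B_2);\,\cup,\cdot)$ and $(R_2;\,\cdot,{}^T)$ as a quotient of $(\mathcal{P}(B_2);\,\cdot,{}^{-1})$, and then to deduce the claim from Propositions~\ref{prop:dolinkasemiring} and~\ref{prop:dolinkainvsemigroup} together with the elementary fact that inherent nonfinite basability is inherited by any finite algebra having an inherently \nfb\ divisor.

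First I would note the key feature of $B_2$ viewed as a set of Boolean $2\times2$-matrices: it consists of the zero matrix together with the four matrix units, and matrix multiplication restricted to $B_2$ coincides with the multiplication of $\mB_2$ --- this is exactly why $\mB_2$ is a subsemigroup of $(R_2;\,\cdot)$. In particular, every Boolean $2\times2$-matrix is the entrywise supremum $\bigvee_{a\in A}a$ of a suitable subset $A\subseteq B_2$. I would then define $\Phi\colon\mathcal{P}(B_2)\to R_2$ by $\Phi(A):=\bigvee_{a\in A}a$ for $A\ne\varnothing$ and $\Phi(\varnothing):=\left(\begin{smallmatrix}0&0\\0&0\end{smallmatrix}\right)$, and check that $\Phi$ is a surjective homomorphism of \ais{}s. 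Surjectivity is the observation just made; compatibility with addition is immediate from $\bigvee_{a\in A\cup B}a=\bigl(\bigvee_{a\in A}a\bigr)+\bigl(\bigvee_{b\in B}b\bigr)$; and compatibility with multiplication follows from distributivity in $R_2$ together with the closure of $B_2$ under matrix multiplication:
\[
\Phi(A)\,\Phi(B)=\Bigl(\sum_{a\in A}a\Bigr)\Bigl(\sum_{b\in B}b\Bigr)=\sum_{a\in A,\ b\in B}ab=\bigvee\{ab\mid a\in A,\ b\in B\}=\Phi(A\cdot B).
\]

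For the involution part I would additionally verify $\Phi(A^{-1})=\Phi(A)^T$: the inverse of each $a\in B_2$ in $\mB_2$ is its transpose, and transposition commutes with the entrywise supremum, so $\Phi(A^{-1})=\bigvee_{a\in A}a^T=\bigl(\bigvee_{a\in A}a\bigr)^T=\Phi(A)^T$. Hence $\Phi$ is simultaneously a surjective homomorphism $(\mathcal{P}(B_2);\,\cup,\cdot)\to(R_2;\,+,\cdot)$ and $(\mathcal{P}(B_2);\,\cdot,{}^{-1})\to(R_2;\,\cdot,{}^T)$, so $R_2$ belongs, in either signature, to the variety generated by $\mathcal{P}(B_2)$.

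Finally I would close with the routine observation --- valid verbatim for \ais{}s and for involution semigroups --- that if a finite algebra $\mathcal A$ is inherently \nfb\ and $\mathcal A\in\var\mathcal B$ for a finite algebra $\mathcal B$, then $\mathcal B$ is inherently \nfb\ as well: otherwise $\mathcal B$ would lie in some finitely based locally finite variety $\mathcal V$, forcing $\var\mathcal B\subseteq\mathcal V$ and hence $\mathcal A\in\mathcal V$, contrary to the choice of $\mathcal A$. Applying this with $\mathcal A=R_2$ (inherently \nfb\ by Proposition~\ref{prop:dolinkasemiring}, respectively Proposition~\ref{prop:dolinkainvsemigroup}) and $\mathcal B=\mathcal{P}(B_2)$ yields the corollary. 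The argument is essentially free of obstacles; the only step that merits a line of justification is the multiplicativity of $\Phi$, where one must use that the element-wise product of subsets of $B_2$ is computed by matrix multiplication and that $B_2$ is closed under it.
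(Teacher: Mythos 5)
Your proposal is correct and follows essentially the same route as the paper: the paper also defines the surjection $\mathcal{P}(B_2)\to R_2$ sending $\varnothing$ to the zero matrix and each non-empty subset to the (Boolean) sum of its matrices, verifies that it respects union, multiplication and inversion/transposition, and then invokes Propositions~\ref{prop:dolinkasemiring} and~\ref{prop:dolinkainvsemigroup}. Your explicit remark that inherent nonfinite basability passes from a member of $\var\mathcal{B}$ to $\mathcal{B}$ itself is the same (standard) transfer the paper uses implicitly.
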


\begin{proof}
It was observed in~\cite[Corollary 6.3]{Dolinka09a} that $(\mathcal{P}(B_2);\,\cup,\cdot,\varnothing)$ is inherently \nfb{} as an \ais{} with 0, and the proof applies to the ``\ais{}'' part of our corollary as well. The ``involution'' part does not seem to have been explicitly stated so far, but it follows in virtue of the same argument; the argument itself goes back to at least the proof of Lemma~2 in \cite{Margolis81}. For the reader's convenience, we reproduce the argument in a combined form that covers both parts at the same time.

Consider the map $\sigma\colon\mathcal{P}(B_2)\to R_2$ that sends the empty set to $\left(\begin{smallmatrix} 0&0\\0&0\end{smallmatrix}\right)$ and each non-empty subset $A\subseteq B_2$ to the sum of all matrices in $A$. As every Boolean $2\times 2$-matrix is the sum of some matrices in \eqref{eq:brandt}, the map $\sigma$ is onto. It is routine to verify that $\sigma$ respects the three operations we consider on $\mathcal{P}(B_2)$, that is,
for all $A,B\subseteq B_2$,
\[
(A\cup B)\sigma=A\sigma+B\sigma, \qquad (AB)\sigma=A\sigma\cdot B\sigma, \qquad (A^{-1})\sigma=(A\sigma)^T.
\]
Therefore, $(R_2;\,+,\cdot)$ is a homomorphic image of $(\mathcal{P}(B_2);\,\cup,\cdot)$ and $(R_2;\,\cdot,{}^T)$ is a homomorphic image of $(\mathcal{P}(B_2);\,\cdot,{}^{-1})$. Hence, Propositions~\ref{prop:dolinkasemiring} and~\ref{prop:dolinkainvsemigroup} apply.
\end{proof}

\section{Proofs of Theorems~\ref{thm:powerinvsgp} and~\ref{thm:invpowerinvsgp}}
\label{sec:proof}

\setcounter{section}{1}

Recall the statements that we are going to prove:
\begin{theorem}
Let $\mathcal{S}=(S,\cdot)$ be a finite inverse semigroup. Suppose that either $\mathcal{S}$ is not Clifford or all subgroups of $\mathcal{S}$ are solvable and at least one of them is nonabelian. Then the identities of the power semiring $(\mathcal{P}(S);\,\cup,\cdot)$ admit no finite basis.
\end{theorem}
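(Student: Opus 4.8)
The plan is to split the argument according to the two cases in the hypothesis, in both cases reducing to the task of exhibiting an inherently \nfb\ \ais{} inside $\var(\mathcal{P}(S);\,\cup,\cdot)$, since by the discussion in Section~3 this forces $(\mathcal{P}(S);\,\cup,\cdot)$ to be \nfb.

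\textit{Case 1: $\mathcal{S}$ is not Clifford.} The key fact here is a classical structural dichotomy for finite inverse semigroups: a finite inverse semigroup is Clifford if and only if it contains no subsemigroup isomorphic to $\mB_2$. Thus, if $\mathcal{S}$ is not Clifford, then $\mathcal{S}$ has a subsemigroup $\mathcal{T}$ isomorphic to $\mB_2$. Since $\mathcal{T}$ is an inverse subsemigroup of $\mathcal{S}$, it is closed under inversion, so $(\mathcal{P}(T);\,\cup,\cdot)$ is a subsemiring of $(\mathcal{P}(S);\,\cup,\cdot)$; in particular $(\mathcal{P}(B_2);\,\cup,\cdot)\cong(\mathcal{P}(T);\,\cup,\cdot)\in\var(\mathcal{P}(S);\,\cup,\cdot)$. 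By Corollary~\ref{cor:infb}, $(\mathcal{P}(B_2);\,\cup,\cdot)$ is inherently \nfb, so we are done. (A small point to check is that ``subsemigroup isomorphic to $\mB_2$'' can be taken as an \emph{inverse} subsemigroup — but in a finite inverse semigroup every subsemigroup isomorphic to $\mB_2$ is automatically closed under inversion, since the inverse of an element of such a copy of $\mB_2$ is forced by the idempotent structure.)

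\textit{Case 2: $\mathcal{S}$ is Clifford, all its subgroups are solvable, and some subgroup $G_\mu$ is nonabelian.} Here I would follow the strategy of \cite{GuVo23b}. By Proposition~\ref{prop:block-group}, $(\mathcal{P}(S);\,\cdot)$ is a block-group, and by Proposition~\ref{prop:solvability}, all its subgroups are solvable. On the other hand, the subgroup $G_\mu$ sits inside $\mathcal{P}(S)$ as the set of singletons $\{\{g\}\mid g\in G_\mu\}$, which is a subgroup of the power \emph{semigroup} isomorphic to $G_\mu$; hence $(\mathcal{P}(S);\,\cdot)$ has a nonabelian subgroup. The heart of \cite{GuVo23b} (its Theorem~1.1 and the machinery behind it) shows that the power semiring of a finite nonabelian solvable group is \nfb, and — more to the point for us — the proof technique produces, from the data ``block-group whose subgroups are all solvable, at least one nonabelian'', an inherently \nfb\ \ais{} in the generated variety, or directly a \nfb ness certificate. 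I would invoke that argument with $(\mathcal{P}(S);\,\cdot)$ (or rather the \ais{} $(\mathcal{P}(S);\,\cup,\cdot)$) playing the role that the power semiring of a finite group plays in \cite{GuVo23b}: the two structural inputs the argument needs are exactly ``block-group'' and ``solvable subgroups with a nonabelian one'', which Propositions~\ref{prop:block-group} and~\ref{prop:solvability} supply.

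The main obstacle is the second case: one must verify that the combinatorial/algebraic argument of \cite{GuVo23b} — which as stated is about power semirings of finite groups — really only uses the block-group property of the multiplicative reduct together with the existence of a nonabelian solvable subgroup, and hence transfers verbatim to $(\mathcal{P}(S);\,\cup,\cdot)$ when $\mathcal{S}$ is a finite Clifford semigroup with the stated subgroup conditions. Concretely, I expect this means re-examining the specific identities or Zimin-type words used in \cite{GuVo23b} to separate the semiring from finitely based varieties, and checking that the evaluations witnessing non-derivability live inside $\mathcal{P}(S)$ and respect the semilattice-of-groups structure; the block-group laws \eqref{eq:bg1}--\eqref{eq:bg2} are presumably what guarantees the relevant idempotents behave as in the group case. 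Once that transfer is confirmed, both cases close and the theorem follows.
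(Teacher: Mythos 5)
Your Case 1 rests on a false structural claim. It is \emph{not} true that a finite inverse semigroup is Clifford if and only if it contains no subsemigroup isomorphic to $\mB_2$. For a counterexample, let $x$ be the partial bijection of $\{1,2,3,4\}$ with $1\mapsto 2$, $3\mapsto 4$, $4\mapsto 3$, and let $\mathcal{T}$ be the inverse subsemigroup of the symmetric inverse monoid generated by $x$. It has six elements: $x$, $x^{-1}$, the partial identities $e,f,g$ on $\{1,3,4\}$, $\{2,3,4\}$, $\{3,4\}$, and the transposition $t$ of $\{3,4\}$. Since $xx^{-1}=e\ne f=x^{-1}x$, the element $x$ lies in no subgroup, so $\mathcal{T}$ is not Clifford. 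Yet $\mathcal{T}$ contains no copy of $\mB_2$: the only idempotents are $e,f,g$; every non-idempotent ($x$, $x^{-1}$, $t$) squares to $g$, so the zero of any alleged copy of $\mB_2$ would have to be $g$; but $g$ annihilates none of $x$, $x^{-1}$, $t$ (e.g.\ $gx=t\ne g$), because the kernel $\{g,t\}$ of $\mathcal{T}$ is a two-element group rather than a zero. The correct statement is in terms of \emph{divisors}: by the classification of monogenic inverse semigroups, a non-Clifford finite inverse semigroup has an inverse subsemigroup $T$ with $\mB_2$ as a homomorphic image (in the example above, the Rees quotient by $\{g,t\}$ is exactly $\mB_2$). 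This is what the paper uses, lifting the homomorphism to power sets so that $(\mathcal{P}(B_2);\,\cup,\cdot)$ is a homomorphic image of the subsemiring $(\mathcal{P}(T);\,\cup,\cdot)$, after which Corollary~\ref{cor:infb} applies. So your Case 1 is repairable, but not by the embedding you assert.

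In Case 2 the transfer you defer to is precisely where the work lies, and your expectation of what the machinery of \cite{GuVo23b} needs is not accurate. The relevant result (Proposition~\ref{prop:gusevaisemiring}, i.e.\ Theorem~4.2 of \cite{GuVo23b}) does not take ``block-group with solvable subgroups, one of them nonabelian'' as input: besides the block-group and solvability hypotheses (which Propositions~\ref{prop:block-group} and~\ref{prop:solvability} indeed supply), it requires that $\var\mathcal{T}$ contain the \ais{} $(B_2^1;\,+,\cdot)$. Establishing that membership is done in the paper by an explicit construction: one picks a \emph{non-normal} subgroup $H$ of some subgroup $G$ of $\mS$ and an element $g$ with $g^{-1}Hg\ne H$, and checks that $B=\{E,H,g^{-1}H,Hg,g^{-1}Hg\}\cup\{A\subseteq G\mid |A|>|H|\}$ is a subsemiring of $(\mathcal{P}(S);\,\cup,\cdot)$ mapping homomorphically onto $(B_2^1;\,+,\cdot)$. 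This requires a non-Dedekind subgroup; when all subgroups are Dedekind but some is nonabelian, no such $H$ exists and the argument of \cite{GuVo23b} cannot be made to apply--instead one uses that a nonabelian Dedekind group contains the $8$-element quaternion group, so $(\mathcal{P}(S);\,\cdot)$ has a nonabelian nilpotent subgroup, and concludes by \cite[Theorem~6.1]{JackRenZhao22}. Your sketch neither produces $(B_2^1;\,+,\cdot)$ inside $\var(\mathcal{P}(S);\,\cup,\cdot)$ nor treats the Dedekind-but-nonabelian subcase (the singleton copy of $G_\mu$ you mention feeds neither hypothesis), so the second half of the theorem remains unproved as written.
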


\begin{theorem}
Let $\mathcal{S}=(S,\cdot)$ be a finite inverse semigroup. Suppose that either $\mathcal{S}$ is not Clifford or all subgroups of $\mathcal{S}$ are solvable and at least one of them is non-Dedekind. Then the identities of the power involution semigroup $(\mathcal{P}(S);\,\cdot,{}^{-1})$ admit no finite basis.
\end{theorem}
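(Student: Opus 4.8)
The plan is to treat the two theorems together since their proofs run in parallel, differing only in which ingredient from the group case one invokes at the end. The overall strategy is the familiar ``reduce-to-a-known-example-or-a-known-result'' scheme: we show that the variety generated by $(\mathcal{P}(S);\,\cup,\cdot)$ (respectively $(\mathcal{P}(S);\,\cdot,{}^{-1})$) either contains an inherently \nfb\ object or contains a \nfb\ power object already known from~\cite{GuVo23b}.

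First I would dispose of the non-Clifford case. If $\mS$ is a finite inverse semigroup that is not Clifford, then $\mS$ fails to be a union of groups, which by standard inverse-semigroup structure theory forces $\mS$ to contain a copy of the five-element Brandt semigroup $\mB_2$; indeed, a finite inverse semigroup is Clifford if and only if it contains no subsemigroup isomorphic to $\mB_2$ (equivalently, its idempotents are central), so non-Clifford finite inverse semigroups have $\mB_2$ as a subsemigroup. A subsemigroup $T\le\mS$ closed under inversion yields an \ais{} embedding (and an involution-semigroup embedding) $(\mathcal{P}(T);\,\cup,\cdot)\hookrightarrow(\mathcal{P}(S);\,\cup,\cdot)$, because unions and element-wise products of subsets of $T$ stay inside $T$, and likewise $A^{-1}\subseteq T$. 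Hence $(\mathcal{P}(B_2);\,\cup,\cdot)$ lies in the variety generated by $(\mathcal{P}(S);\,\cup,\cdot)$, and similarly for the involution structures. By Corollary~\ref{cor:infb} the former is inherently \nfb, so the generating power semiring is \nfb; the involution case is identical. This case needs no solvability hypothesis, which matches the statement.

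Next I would handle the Clifford case, where by hypothesis all subgroups of $\mS$ are solvable and some subgroup $G_\mu$ is nonabelian (for Theorem~\ref{thm:powerinvsgp}) or non-Dedekind (for Theorem~\ref{thm:invpowerinvsgp}). Here the idea is to produce the power semiring of a finite nonabelian solvable group inside $\var(\mathcal{P}(S);\,\cup,\cdot)$ and quote~\cite[Theorem 1.1]{GuVo23b} (respectively~\cite[Theorem 1.3]{GuVo23b}). Take the maximal-element construction from the proof of Proposition~\ref{prop:solvability}: choose $\mu$ maximal in $(Y;\le)$, so $G_\mu$ is nonabelian (or non-Dedekind) and $I=\bigcup_{\alpha\ne\mu}G_\alpha$ is an ideal with $S/I\cong G_\mu^0$. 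The Rees quotient map $\mS\to\mS/I$ induces a semiring homomorphism $(\mathcal{P}(S);\,\cup,\cdot)\to(\mathcal{P}(S/I);\,\cup,\cdot)$, and the latter contains $(\mathcal{P}(G_\mu);\,\cup,\cdot)$ as a subsemiring — one must check that the non-zero, non-empty subsets of $G_\mu^0$ not containing $0$ form a subsemiring isomorphic to $(\mathcal{P}(G_\mu);\,\cup,\cdot)$, which is exactly the splitting already used in Proposition~\ref{prop:solvability}. Therefore $(\mathcal{P}(G_\mu);\,\cup,\cdot)\in\var(\mathcal{P}(S);\,\cup,\cdot)$, and since $G_\mu$ is a finite nonabelian solvable group, \cite[Theorem 1.1]{GuVo23b} says $(\mathcal{P}(G_\mu);\,\cup,\cdot)$ is \nfb, hence so is $(\mathcal{P}(S);\,\cup,\cdot)$. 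The involution version is the same argument with $^{-1}$ carried along and \cite[Theorem 1.3]{GuVo23b} invoked for the non-Dedekind subgroup.

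The step I expect to require the most care is the Clifford case reduction — specifically, verifying that $(\mathcal{P}(G_\mu);\,\cup,\cdot)$ genuinely embeds (as \ais, and with involution) into a divisor of $(\mathcal{P}(S);\,\cup,\cdot)$, rather than merely that its \emph{semigroup} reduct does. The subtlety is that a \nfb\ result for a power \emph{semiring} requires the full semiring $(\mathcal{P}(G_\mu);\,\cup,\cdot)$ to appear in the generated \emph{semiring} variety, so one must confirm that the Rees quotient map and the $G_\mu$-versus-$G_\mu^0$ splitting respect union as well as multiplication (and inversion), which they do, but this should be stated explicitly. One should also make sure the non-Clifford criterion ``contains $\mB_2$'' is invoked with a precise reference. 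Everything else — closure of variety under $H$, $S$, $P$; the transfer of inherent non-finite-basedness through divisors; Propositions~\ref{prop:block-group} and~\ref{prop:solvability} if needed to rule out degenerate cases — is routine given the machinery assembled in Sections~2 and~3.
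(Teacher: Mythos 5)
There are two genuine gaps, one in each half of your argument. First, your non-Clifford case rests on the claim that a finite inverse semigroup is Clifford if and only if it contains no subsemigroup isomorphic to $\mB_2$; this is false. Take the inverse subsemigroup of the symmetric inverse monoid generated by the partial bijection $\alpha$ on $\{0,1,2,3\}$ with $0\mapsto 1$, $2\mapsto 3$, $3\mapsto 2$: it has the six elements $\alpha,\alpha^{-1},\alpha\alpha^{-1},\alpha^{-1}\alpha,\alpha^2,\alpha^3$, it is not Clifford (since $\alpha\alpha^{-1}\ne\alpha^{-1}\alpha$), yet it contains no copy of $\mB_2$, because its only non-group elements are $\alpha,\alpha^{-1}$ and no element acts as a zero on them (e.g.\ $\alpha^2\cdot\alpha=\alpha^3\ne\alpha^2$). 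This is exactly why the paper argues via the classification of monogenic inverse semigroups that $\mB_2$ is a \emph{homomorphic image} of an inverse subsemigroup $T$ of $\mS$, and then lifts this to a surjection $(\mathcal{P}(T);\,\cdot,{}^{-1})\to(\mathcal{P}(B_2);\,\cdot,{}^{-1})$, so that $(\mathcal{P}(B_2);\,\cdot,{}^{-1})$ still lies in $\var(\mathcal{P}(S);\,\cdot,{}^{-1})$ and Corollary~\ref{cor:infb} applies. Your conclusion is reachable, but your stated justification is wrong as it stands.

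Second, and more seriously, your Clifford case is logically invalid: from $(\mathcal{P}(G_\mu);\,\cdot,{}^{-1})\in\var(\mathcal{P}(S);\,\cdot,{}^{-1})$ and the fact that this power group is \nfb{} by \cite{GuVo23b}, you cannot conclude that the generator $(\mathcal{P}(S);\,\cdot,{}^{-1})$ is \nfb. Non-finite-basedness does not transfer upward from a member of a variety to an algebra generating a larger variety; that transfer is available only for \emph{inherently} \nfb{} members, which is precisely why Corollary~\ref{cor:infb} is usable in Case 1 but the power groups of \cite{GuVo23b} are not usable this way here. The paper instead invokes the \emph{conditional} results (Propositions~\ref{prop:gusevaisemiring} and~\ref{prop:gusevinvsemigroup}): a finite involution semigroup whose multiplicative reduct is a block-group with solvable subgroups is \nfb{} once its variety contains $(B_2^1;\,\cdot,{}^{-1})$. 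To apply this one must (i) prove that $(\mathcal{P}(S);\,\cdot)$ is a block-group with solvable subgroups — this is the content of Propositions~\ref{prop:block-group} and~\ref{prop:solvability}, which you dismiss as optional but which are essential hypotheses requiring real proofs for Clifford $\mS$ — and (ii) exhibit $(B_2^1;\,\cdot,{}^{-1})$ in $\var(\mathcal{P}(S);\,\cdot,{}^{-1})$, which the paper does by an explicit construction from a non-normal subgroup $H$ of a non-Dedekind subgroup $G$ (the subsets $E, H, g^{-1}H, Hg, g^{-1}Hg$ together with the large subsets of $G$ form an involution subsemigroup mapping onto $B_2^1$). This construction, not a citation of \cite[Theorem 1.3]{GuVo23b}, is the heart of the Clifford case, and it is missing from your proposal.
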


The formulations of Theorems~\ref{thm:powerinvsgp} and~\ref{thm:invpowerinvsgp} are almost parallel, and so are the proofs of these theorems. We first consider the cases where the same arguments work for power semirings and involution semigroups and then the case where the argumentation diverges.

\medskip

\noindent\textbf{Case 1.} \emph{$\mathcal{S}$ is not a Clifford semigroup.}

\smallskip

In this case, $\mathcal{S}$ possesses an element that does not lie in any subgroup of $\mS$ whence the inverse subsemigroup of $\mS$ generated by this element is not a group. A complete classification of monogenic (that is, generated by a single element) inverse semigroups can be found in \cite[Chapter IX]{Pet84}; it implies that if a monogenic inverse semigroup is not a group, then it has the 5-element Brandt semigroup $\mB_2$ as a homomorphic image. Thus, $\mB_2$ is the image of an inverse subsemigroup $T$, say, of $\mS$ under a homomorphism $\tau$, say. We extend $\tau$ to a map $\tau^\sharp\colon\mathcal{P}(T)\to\mathcal{P}(B_2)$, letting $A\tau^\sharp:=\{a\tau\mid a\in A\}$ for each subset $A\subseteq T$. The map $\tau^\sharp$ obviously preserves the operations of union and element-wise multiplication over subsets. Besides, using the well-known fact that semigroup homomorphisms between inverse semigroups automatically preserve the operation of taking inverse, one easily verifies that $(A^{-1})\tau^\sharp=(A\tau^\sharp)^{-1}$ for all  $A\subseteq T$. We conclude that $(\mathcal{P}(B_2);\,\cup,\cdot)$ and $(\mathcal{P}(B_2);\,\cdot,{}^{-1})$ are homomorphic images of $(\mathcal{P}(T);\,\cup,\cdot)$ and respectively $(\mathcal{P}(T);\,\cdot,{}^{-1})$. The latter objects are in turn substructures in $(\mathcal{P}(S);\,\cup,\cdot)$ and respectively $(\mathcal{P}(S);\,\cdot,{}^{-1})$. Thus, we have $(\mathcal{P}(B_2);\,\cup,\cdot)\in\var(\mathcal{P}(S);\,\cup,\cdot)$ and $(\mathcal{P}(B_2);\,\cdot,{}^{-1})\in\var(\mathcal{P}(S);\,\cdot,{}^{-1})$. In view of Corollary~\ref{cor:infb}, these containments  ensure that the power semiring $(\mathcal{P}(S);\,\cup,\cdot)$ and the power involution semigroup $(\mathcal{P}(S);\,\cdot,{}^{-1})$ are \nfb{} (and even inherently \nfb).\qed

\medskip

\noindent\textbf{Case 2.} \emph{$\mathcal{S}$ is a Clifford semigroup with solvable subgroups and at least one subgroup of $\mS$ is non-Dedekind.}

\smallskip

By Propositions~\ref{prop:block-group} and~\ref{prop:solvability}, the power semigroup $(\mathcal{P}(S);\,\cdot)$ is a block-group with solvable subgroups. Now we invoke two results from~\cite{GuVo23b} that give conditions for the absence of a finite identity basis for an \ais{} or an involution semigroup whose multiplicative reduct is a block-group with solvable subgroups. These results involve the \ais{} and the involution semigroup whose multiplicative reduct is the 6-\emph{element Brandt monoid} $\mathcal{B}_2^1$ which we now define. The monoid $\mathcal{B}_2^1=(B_2^1,\cdot)$ is the subsemigroup of the semigroup $(R_2;\,\cdot)$ of all Boolean $2\times 2$-matrices that one gets by adjoining the identity $2\times 2$-matrix $\left(\begin{smallmatrix} 1&0\\0&1\end{smallmatrix}\right)$ to the five matrices in \eqref{eq:brandt}. The semigroup $\mB_2^1$ is inverse and so can be considered as an involution semigroup. Besides, $B_2^1$ is known to admit a unique addition $+$ such that $(B_2^1;\,+,\cdot)$ becomes an \ais; the addition is nothing but the Hadamard (entry-wise) product of matrices: $(a_{ij})+(b_{ij}):=(a_{ij}b_{ij})$.

\setcounter{section}{4}
\setcounter{theorem}{0}

\begin{proposition}
\label{prop:gusevaisemiring}
A finite \ais{} $\mathcal{T}=(T,+,\cdot)$ whose multiplicative reduct is a block-group with solvable subgroups is \nfb{} whenever the variety $\var\mathcal{T}$ contains the \ais\ $(B_2^1;\,+,\cdot)$.
\end{proposition}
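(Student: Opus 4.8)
The plan is to prove that $\mT$ is \nfb\ by the standard syntactic method. First I would fix an explicit infinite family of \ais{} identities $\varepsilon_1,\varepsilon_2,\dots$, show that every $\varepsilon_n$ holds in $\mT$, and then show that for each $n$ there is a finite \ais{} $\mathcal{C}_n$ which satisfies every identity of $\mT$ whose two sides have length at most $n$ and which involves at most $n$ variables, yet fails $\varepsilon_m$ for a suitable $m=m(n)>n$. Since any finite set $\Sigma$ of identities of $\mT$ mentions only boundedly many variables and has bounded length, such a $\mathcal{C}_n$ witnesses that $\varepsilon_m$ — though valid in $\mT$ — does not follow from $\Sigma$; hence $\mT$ has no finite identity basis.

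For the family I would take \ais{} analogues of the Perkins-type identities that witness the non-finite basability of the six-element Brandt monoid $\mB_2^1$: on each side one glues several ``Brandt blocks'' along shared variables, arranged so that the two sides take equal values under every substitution into an \ais{} whose multiplicative reduct is a block-group. Validity of $\varepsilon_n$ in $\mT$ is where the hypotheses are used. Relying on the description, due to Gusev and Volkov~\cite{GuVo23a}, of the identities of \ais{}s with block-group multiplicative reduct — roughly, the value of a product term on a tuple of elements is controlled by its content together with the values taken in the group divisors — one reduces the check to a content bookkeeping that holds by construction and to a verification inside the subgroups of $\mT$; solvability of those subgroups then lets one import the finite-group arguments of~\cite{GuVo23b} and conclude $\mT\models\varepsilon_n$.

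For the counterexample semirings I would take, for each $n$, a sufficiently long ``chain-like'' \ais{} $\mathcal{C}_n$ assembled from the five-element Brandt semigroup $\mB_2$ — concretely, a subsemiring of some $(R_N;+,\cdot)$ generated by a long directed path, with $N$ large relative to $n$ — so that $\mathcal{C}_n$ again has a block-group (indeed aperiodic) multiplicative reduct, the description of~\cite{GuVo23a} pins down which identities it obeys, and in particular $\mathcal{C}_n$ satisfies every identity of length at most $n$ that is valid in $(B_2^1;+,\cdot)$. The hypothesis $(B_2^1;+,\cdot)\in\var\mT$ enters precisely here: it makes the identities of $\mT$ a subset of those of $(B_2^1;+,\cdot)$, so every short identity of $\mT$ automatically holds in $\mathcal{C}_n$; meanwhile a substitution that walks along the path detects the asymmetry between the two sides of $\varepsilon_m$, so $\mathcal{C}_n\not\models\varepsilon_m$ once the path is long enough relative to $m$.

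The \textbf{main obstacle} is the structural input: making the \cite{GuVo23a}-style description of identities of block-group \ais{}s precise and usable, verifying that both $\mT$ and the semirings $\mathcal{C}_n$ fall under it, and — this is where solvability is genuinely needed — excluding composition factors of the subgroups of $\mT$, such as $A_5$, that would falsify some $\varepsilon_n$. A second delicate point is the quantitative bookkeeping relating $m$ to $n$: one has to bound how long the path in $\mathcal{C}_n$ must be so that it simultaneously absorbs every identity of $\mT$ of length and variable count at most $n$ while still failing to absorb $\varepsilon_m$. The remaining ingredients — closure of the block-group class under the constructions used, the finite check that each $\varepsilon_n$ holds in $(B_2^1;+,\cdot)$, and the routine arithmetic of Boolean matrices — are mechanical.
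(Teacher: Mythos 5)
First, note that the paper does not actually prove this proposition: it is imported wholesale, being explicitly described as ``a reformulation of Theorem 4.2 in \cite{GuVo23b}''. So a blind proof would have to reconstruct the content of that external theorem. Your outline does identify the right general shape of such a reconstruction --- an infinite family of identities $\varepsilon_n$ valid in $\mathcal{T}$, together with finite counterexample semirings $\mathcal{C}_n$ satisfying all identities of $\mathcal{T}$ in at most $n$ variables but failing some $\varepsilon_m$ --- and this is indeed the kind of scheme used in \cite{Vol21,GuVo23a,GuVo23b}.

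However, there is a genuine gap: neither of the two load-bearing components is actually produced, so the hypotheses of the proposition are never cashed in by a checkable argument. The identities $\varepsilon_n$ are never written down, and the only step where ``block-group with solvable subgroups'' matters --- proving that every such finite ai-semiring satisfies all $\varepsilon_n$ --- is handled by proposing to ``import the finite-group arguments of \cite{GuVo23b}'', i.e., by appealing to the very source of the result being proved; a blind proof must instead construct identities that are demonstrably sensitive to solvability (they must fail in the presence of, say, $A_5$-type subgroups) and verify them from the quasi-identities \eqref{eq:bg1}--\eqref{eq:bg2} plus solvability. Symmetrically, the counterexamples are only gestured at (``a subsemiring of $(R_N;+,\cdot)$ generated by a long directed path''), and the central claim that such a semiring satisfies every at-most-$n$-variable identity of $(B_2^1;+,\cdot)$ while violating a longer identity valid in $\mathcal{T}$ is asserted rather than argued --- this is precisely the delicate combinatorial heart of Volkov-style nonfinite-basis proofs, and it is also where your appeal to a general ``description of the identities of ai-semirings with block-group multiplicative reduct'' in \cite{GuVo23a} overstates what that paper supplies. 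In short, the plan is consistent with how the cited literature proceeds, but as written it is a research programme with its two hardest steps left open (and partly circular), not a proof of the proposition.
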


\begin{proposition}
\label{prop:gusevinvsemigroup}
A finite involution semigroup $\mathcal{T}=(T,\cdot,{}^*)$ whose multiplicative reduct is a block-group with solvable subgroups is \nfb{} whenever the variety $\var\mathcal{T}$ contains the inverse semigroup $(B_2^1;\,\cdot,{}^{-1})$.
\end{proposition}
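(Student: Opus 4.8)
The argument is the involution analogue of the proof of Proposition~\ref{prop:gusevaisemiring} and is carried out in~\cite{GuVo23b}; here is how I would run it. The plan is to argue by contradiction. If $\mathcal{T}$ were finitely based, then $\var\mathcal{T}$ would be definable, inside the variety of all involution semigroups, by finitely many identities, hence by finitely many identities in at most $n$ variables for some fixed $n$; so every involution semigroup satisfying all identities of $\mathcal{T}$ in at most $n$ variables would lie in $\var\mathcal{T}$. It therefore suffices to construct, for each positive integer $n$, a finite involution semigroup $\mathcal{A}_n$ that (i) satisfies every identity of $\mathcal{T}$ in at most $n$ variables and yet (ii) does not belong to $\var\mathcal{T}$ --- for then $\mathcal{A}_n$ obeys the supposed finite basis while lying outside $\var\mathcal{T}$, a contradiction.

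I would assemble $\mathcal{A}_n$ from two ingredients. The first is a ``combinatorial'' piece: a finite involution semigroup that scales up Brandt-type behaviour, and whose presence inside $\mathcal{A}_n$ is what forces (ii). Both hypotheses on $\mathcal{T}$ enter here. Since $(B_2^1;\,\cdot,{}^{-1})\in\var\mathcal{T}$, the identity theory of $\mathcal{T}$ is small enough that this scaled-up piece cannot be ``absorbed'' by it; and since the multiplicative reduct of $\mathcal{T}$ is a block-group --- i.e. satisfies~\eqref{eq:bg1} and~\eqref{eq:bg2} --- the equational theory of $\mathcal{T}$ is confined enough that, with the structure theory of power semigroups of finite Clifford semigroups from Section~\ref{sec:clifford} at hand, one can exhibit a concrete identity valid in $\var\mathcal{T}$ but failing in $\mathcal{A}_n$. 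The second ingredient is a suitable finite solvable group glued into the combinatorial piece along a controlled set of coordinates; it is needed because the witnessing identity is inevitably group-flavoured (so a combinatorial piece alone could not fail it), and securing (i) then requires an inductive verification along a subnormal series with abelian factors --- this is the step that makes solvability indispensable. I would also check that $\mathcal{A}_n$ is again a block-group with solvable subgroups, so that the identities $\mathcal{T}$ satisfies for purely structural reasons --- the block-group identities and those forced by $(B_2^1;\,\cdot,{}^{-1})$ --- are inherited automatically. The involution is carried along throughout: each building block is equipped with its natural inversion and the gluing maps are checked to respect it, in the spirit of~\cite{ADV12} and of the proof of Corollary~\ref{cor:infb}.

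The main obstacle, exactly as in the group case of~\cite{GuVo23b}, is the tension between (i) and (ii): $\mathcal{A}_n$ must be close enough to $\mathcal{T}$ to obey all of its short identities, yet structurally rich enough to escape $\var\mathcal{T}$. I expect the harder half to be (i): verifying that $\mathcal{A}_n$ satisfies every identity of $\mathcal{T}$ in at most $n$ variables requires a normal-form analysis of the word problem of $\mathcal{T}$ that cleanly separates its ``combinatorial part'' (taken care of by the block-group structure of $\mathcal{A}_n$) from its ``group part'' (taken care of by the solvable group and the induction along its subnormal series). For groups this was done in~\cite{GuVo23b}; the block-group setting forces one to redo the relevant bookkeeping through Green's relations, which is precisely what Propositions~\ref{prop:block-group} and~\ref{prop:solvability} and Lemma~\ref{lem:putcha} are set up to enable. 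For the complete argument I would defer to~\cite{GuVo23b}.
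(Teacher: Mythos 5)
Your bottom line coincides with the paper's: the paper does not prove this proposition here at all, but obtains it by combining Proposition~3.9 and Theorem~5.1 of~\cite{GuVo23b}, so deferring to~\cite{GuVo23b} is exactly what the authors do. The problem is that everything you add around that citation is speculative scaffolding, and part of it misrepresents the logical architecture of the present paper. Proposition~\ref{prop:gusevinvsemigroup} is a general criterion about an \emph{arbitrary} finite involution semigroup $\mathcal{T}$ whose multiplicative reduct is a block-group with solvable subgroups; nothing in its statement or in its proof in~\cite{GuVo23b} involves power semigroups of Clifford semigroups. Hence Propositions~\ref{prop:block-group} and~\ref{prop:solvability} and Lemma~\ref{lem:putcha} cannot be, and are not, ``set up to enable'' the proof of this proposition: they are used in the opposite direction, namely in Case~2 of the proofs of Theorems~\ref{thm:powerinvsgp} and~\ref{thm:invpowerinvsgp}, to verify that $(\mathcal{P}(S);\,\cdot)$ \emph{satisfies the hypotheses} of Propositions~\ref{prop:gusevaisemiring} and~\ref{prop:gusevinvsemigroup}. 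Your claim that the block-group setting ``forces one to redo the relevant bookkeeping through Green's relations, which is precisely what Propositions~\ref{prop:block-group} and~\ref{prop:solvability} and Lemma~\ref{lem:putcha} are set up to enable'' inverts this dependency, and is also at odds with Remark~\ref{rem:margolis}, where the authors stress that those proofs deliberately avoid Green's relations. Likewise, ``the structure theory of power semigroups of finite Clifford semigroups from Section~\ref{sec:clifford}'' cannot help exhibit an identity failing in your $\mathcal{A}_n$, since $\mathcal{T}$ is not assumed to be a power structure of anything.

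As a standalone argument there is a genuine gap: beyond the correct (and standard) reduction of nonfinite basability to producing, for each $n$, a finite involution semigroup satisfying all at-most-$n$-variable identities of $\mathcal{T}$ but lying outside $\var\mathcal{T}$, nothing is actually constructed or verified --- the algebras $\mathcal{A}_n$, the witnessing identities, the role of the hypothesis $(B_2^1;\,\cdot,{}^{-1})\in\var\mathcal{T}$, and both halves (i) and (ii) remain placeholders (``I would assemble\dots'', ``I expect\dots''). If the citation of~\cite{GuVo23b} is accepted, the proposition stands, exactly as in the paper; but then the honest proof is the two-line attribution, and the conjectural reconstruction (in particular the wiring of Section~\ref{sec:clifford} into it) should be dropped or corrected.
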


Proposition~\ref{prop:gusevaisemiring} is a reformulation of Theorem 4.2 in \cite{GuVo23b} and Proposition~\ref{prop:gusevinvsemigroup} follows by combining Proposition~3.9 and Theorem~5.1 of~\cite{GuVo23b}.

To be able to invoke Propositions~\ref{prop:gusevaisemiring} and~\ref{prop:gusevinvsemigroup}, we have to check that $\var(\mathcal{P}(S);\,\cup,\cdot)$ contains the \ais\ $(B_2^1;\,+,\cdot)$ and $\var(\mathcal{P}(S);\,\cdot,{}^{-1})$ contains the inverse semigroup $(B_2^1;\,\cdot,{}^{-1})$. We will do this at the same time, combining arguments from the proofs of Theorems~1.1 and~1.3 in~\cite{GuVo23b}.

Let $G$ be a subgroup of $\mathcal{S}$ that is not Dedekind. Then, some subgroup $H$ of $G$ is not normal. Fix an element $g\in G$ with $g^{-1}Hg\ne H$. Denote by $E$ the singleton subgroup of $\mathcal{G}$, let $J:=\{A\subseteq G\mid |A|>|H|\}$, and consider the following subset of $\mathcal{P}(S)$:
\[
B:=\{E,\,H,\,g^{-1}H,\,Hg,\,g^{-1}Hg\}\cup J.
\]
Notice that all subsets in $\{E,\,H,\,g^{-1}H,\,Hg,\,g^{-1}Hg\}$ are distinct. It is obvious for all pairs of the subsets except, perhaps, for $g^{-1}H\ne Hg$. To verify this inequality, observe that $g^{-1}H\cdot Hg=g^{-1}Hg$ while $Hg\cdot g^{-1}H=H$, so that the assumption $g^{-1}H=Hg$ forces the equality $g^{-1}Hg=H$ that contradicts the choice of $g$.

It is straightforward that the set $B$ is closed under unions and element-wise products as well as under element-wise inversion of its elements. Hence, $(B;\,\cup,\cdot)$ is a subsemiring of $(\mathcal{P}(S);\,\cup,\cdot)$ while $(B;\,\cdot,{}^{-1})$ is an involution subsemigroup of $(\mathcal{P}(S);\,\cdot,{}^{-1})$. Define a map $B\to B_2^1$ via the following rule:
\[
\begin{tabular}{cccccc}
$A\in J$ & $E$ & $Hg$ & $g^{-1}H$ & $H$ & $g^{-1}Hg$\\
$\downarrow$ & $\downarrow$ & $\downarrow$ & $\downarrow$ & $\downarrow$ & $\downarrow$ \\
$\left(\begin{matrix} 0&0\\0&0\end{matrix}\right)$
&
$\left(\begin{matrix} 1&0\\0&1\end{matrix}\right)$
&
$\left(\begin{matrix} 0&1\\0&0\end{matrix}\right)$
&
$\left(\begin{matrix} 0&0\\1&0\end{matrix}\right)$
&
$\left(\begin{matrix} 1&0\\0&0\end{matrix}\right)$
&
$\left(\begin{matrix} 0&0\\0&1\end{matrix}\right)$
\end{tabular}.
\]
The map can be routinely verified to respect the three operations we consider on $B$. Therefore, the \ais\ $(B_2^1;\,+,\cdot)$ is a homomorphic image of $(B,\cup,\cdot)$ and the inverse semigroup $(B_2^1;\,\cdot,{}^{-1})$ is a homomorphic image of $(B;\,\cdot,{}^{-1})$. Thus, $(B_2^1;\,+,\cdot)\in\var(\mathcal{P}(S);\,\cup,\cdot)$ and $(B_2^1;\,\cdot,{}^{-1})\in\var(\mathcal{P}(S);\,\cdot,{}^{-1})$. Hence Propositions~\ref{prop:gusevaisemiring} and~\ref{prop:gusevinvsemigroup} apply.\qed

\medskip

Since Cases 1 and 2 exhaust the premise of Theorem~\ref{thm:invpowerinvsgp}, this theorem is now proved. For Theorem~\ref{thm:powerinvsgp}, it remains to consider the following:

\smallskip

\noindent\textbf{Case 3.} \emph{$\mathcal{S}$ is a Clifford semigroup with Dedekind subgroups and at least one subgroup of $\mS$ is nonabelian.}

\smallskip

Let $G$ be a nonabelian subgroup of $\mathcal{S}$. Since ${G}$ is Dedekind, it has a subgroup isomorphic to the 8-element quaternion group; see \cite[Theorem 12.5.4]{Hall:1959}. This subgroup is isomorphic to a subgroup in the semigroup $(\mathcal{P}(S);\,\cdot)$, and by~\cite[Theorem~6.1]{JackRenZhao22} every \ais\ whose multiplicative reduct has a nonabelian nilpotent subgroup is \nfb. We see that $(\mathcal{P}(G);\,\cup,\cdot)$ has no finite identity basis in this case as well.\qed

\begin{remark}
\label{rem:with0}
It is quite natural to consider $(\mathcal{P}(S);\,\cup,\cdot,\varnothing)$ as an algebra with an extra nullary operation of taking the constant $\varnothing$; this was actually the setting adopted by the first-named author in~\cite{Dolinka09a}. The conclusion of Theorem~\ref{thm:powerinvsgp} persists in this setting. In the argument of Case 1, one should substitute $(\mathcal{P}(B_2);\,\cup,\cdot,\varnothing)$ for $(\mathcal{P}(B_2);\,\cup,\cdot)$ and refer to~\cite[Corollary 6.3]{Dolinka09a} instead of Corollary~\ref{cor:infb}. For Case 2, the fact that the proof of Proposition~\ref{prop:gusevaisemiring} works for semirings with multiplicative zero treated as a constant was explicitly recorded in \cite[Remark 3]{GuVo23b}. For Case 3, one should refer to Theorem~7.6 from \cite{JackRenZhao22} that transfers the result of~\cite[Theorem~6.1]{JackRenZhao22} to semirings with multiplicative zero treated as a constant.
\end{remark}

\smallskip

\textbf{Acknowledgements.} The authors thank Edmond W. H. Lee for valuable remarks.

\end{document}